%
%
%
%
%
\documentclass[12pt]{amsart}
%
%
\usepackage{amsmath,amsfonts,amssymb,amsthm,graphicx,thmtools,mathtools,xparse,algorithmic,enumitem}
\usepackage{ulem}  
\PassOptionsToPackage{hyphens}{url}\usepackage{hyperref}
\hypersetup{
    colorlinks,
    citecolor=black,
    filecolor=black,
    linkcolor=black,
    urlcolor=black
}

%
%
\headheight=8pt       \topmargin=30pt\topmargin=-10pt
\textheight=651pt     \textwidth=456pt
\oddsidemargin=6pt   \evensidemargin=6pt

\makeatletter
\g@addto@macro\normalsize{%
  \setlength\abovedisplayskip{10pt}
  \setlength\belowdisplayskip{10pt}
  \setlength\abovedisplayshortskip{10pt}
  \setlength\belowdisplayshortskip{10pt}
}
\makeatother


\renewcommand{\emph}[1]{{\it #1}}

\newcommand{\NN}{\mathbb{N}}
\newcommand{\ZZ}{\mathbb{Z}}
\newcommand{\QQ}{\mathbb{Q}}
\newcommand{\RR}{\mathbb{R}}
\newcommand{\CC}{\mathbb{C}}
\newcommand{\PP}{\mathbb{P}}


\newcommand{\calN}{\mathcal{N}}
\newcommand{\calO}{\mathcal{O}}
\newcommand{\variety}{\mathcal{V}}

\newcommand{\bs}{\boldsymbol}




\DeclareMathOperator{\Vol}{\rm Vol}

\DeclareMathOperator{\ind}{\rm ind}

\DeclareMathOperator{\Cone}{\rm Cone}

\DeclareMathOperator{\spa}{\rm span}
\DeclareMathOperator{\conv}{\rm conv}

\DeclareMathOperator{\tr}{{\rm tr}}
\DeclareMathOperator{\codim}{{\rm codim}}
\DeclareMathOperator{\ess}{{\it ess}}


\DeclarePairedDelimiter{\norm}{\lVert}{\rVert}
\NewDocumentCommand{\normL}{ s O{} m }{%
  \IfBooleanTF{#1}{\norm*{#3}}{\norm[#2]{#3}}_{L_2(\Omega)}%
}

\newcommand\diff[3]{(D^{#2} \, #1)_{#3}}

\newtheorem{thm}{Theorem}[section]
\newtheorem{prop}[thm]{Proposition}
\newtheorem{cor}[thm]{Corollary}
\theoremstyle{definition}
\newtheorem{example}[thm]{Example}
\newtheorem{defn}[thm]{Definition}
\newtheorem{remark}[thm]{Remark}
\declaretheorem{algorithm}

\newcommand{\sI}{\includegraphics{./paper_bottom/pictures/s1}}
\newcommand{\sII}{\includegraphics{./paper_bottom/pictures/s11}}
\newcommand{\sT}{\includegraphics{./paper_bottom/pictures/s2}}

\newcommand{\defcolor}[1]{{#1}} 
\newcommand{\demph}[1]{\emph{#1}}

\title[Certification for Polynomial Systems via Square Subsystems]{Certification for Polynomial Systems\\ via Square
  Subsystems} 
\author{Timothy Duff}
\address{School of Mathematics\\
         Georgia Institute of Technology\\
         686 Cherry Street\\
         Atlanta, GA 30332-0160\\
         USA}
\email{tduff3@gatech.edu}
\urladdr{http://people.math.gatech.edu/~tduff3/}
\author{Nickolas Hein}
\address{Department of Mathematics and Computer Science\\
         Benedictine College\\
         1020 N.\ 2nd St\\
         Atchison, KS 66002\\
         USA}
\email{nhein@benedictine.edu}
\urladdr{https://www.benedictine.edu/faculty-staff/hein-nickolas}
\author{Frank Sottile}
\address{Frank Sottile\\
         Department of Mathematics\\
         Texas A\&M University\\
         College Station\\
         Texas \ 77843\\
         USA}
\email{sottile@math.tamu.edu}
\urladdr{www.math.tamu.edu/\~{}sottile}
\thanks{Work of Sottile supported in part by the National Science Foundation under grant DMS-1501370}
\thanks{Work of Duff supported in part by the National Science Foundation under grant DMS-1719968}
\thanks{Duff and Sottile supported by the ICERM}
\subjclass[2010]{65G20, 65H10}
\keywords{certified solutions, alpha theory, polynomial system, numerical algebraic
  geometry, Newton-Okounkov bodies, Schubert calculus}

\begin{document}
\begin{abstract} 
 We consider numerical certification of approximate solutions to a system of polynomial equations with more equations than
 unknowns by first certifying solutions to a square subsystem. 
 We give several approaches that certifiably select which are solutions to the original
 overdetermined system.
 These approaches each use different additional information for this certification, such as liaison, Newton-Okounkov
 bodies, or intersection theory.
 They may be used to certify individual solutions, reject nonsolutions, or certify that we have found all solutions.
\end{abstract}

\maketitle
\section{Introduction}
\label{sec:intro}

Given polynomials $f=(f_1,\dotsc,f_N)$ with $f_i\in\CC [z_1, \dotsc , z_n]$, 
an approximate solution to the system $f_1(z)=\dotsb=f_N (z)=0$ is an  
estimate $\hat{\zeta}$ of some point $\zeta$ where the polynomials all vanish ($\zeta$ is a \emph{solution} to $f$), such
that the approximation error $\|\zeta - \hat{\zeta}\|$ can be refined efficiently as a function of the input size and
desired precision. 
Numerical certification seeks criteria and algorithms 
that guarantee that a computed estimate $\hat{\zeta}$ of a solution $\zeta$ to $f$ is an
approximate solution in this sense.

Many existing certification methods~\cite{Kraw,Sma86} are for square systems, where $N=n$.
These exploit that the isolated, nonsingular solutions to the system are exactly the fixed points of the Newton
operator $N_f \colon \CC^n \to \CC^n$ given (where defined) by 
 \begin{equation}\label{Eq:Newton}
   N_f(z)\ :=\ z - Df(z)^{-1} f(z)\,,
 \end{equation}
where $Df(z)$ is the Jacobian matrix of the system $f$ evaluated at $z$.
A Newton-based certificate establishes that the sequence of Newton iterates $(N_f^k(\hat{\zeta})\mid k\in\NN)$ converges
to a solution $\zeta$ to $f$. 
Examples include both Smale's $\alpha$-test~\cite{SS93,Sma86} (typically performed in rational arithmetic)
and Krawczyk's method~\cite{Kraw} (based on interval arithmetic).

Once such a certificate is in hand, we say that $\hat{\zeta}$ is an approximate solution to $f$ with associated solution
$\zeta$.  
Further refinements bound the distance to the associated solution $\|\zeta - \hat{\zeta}\|$, decide if two approximate
solutions are associated to the same solution, and, in the case of real systems, decide if the associated solution is
real~\cite{HS12}. 

Certification in the overdetermined case, where $N>n$, poses challenges not encountered in the square case. A detailed study of the ``least-squares'' Newton operator,
\begin{equation}\label{Eq:ODNewton}
   N_{f, \dagger }(z)\ :=\ z - Df(z)^{\dagger} f(z)\,,
\end{equation}
was undertaken by Dedieu and Shub~\cite{DS00}. Here, ``$\dagger $'' indicates the Moore-Penrose pseudoinverse. Among the contributions in their work is a generalization of Smale's $\alpha$-theorem, Theorem 3 in~\cite{DS00}, giving sufficient conditions for the convergence of $(N_{f, \dagger }^k(\hat{\zeta})\mid k\in\NN)$ to a fixed point. Since there may be many fixed points of $N_{f, \dagger}$ which are not solutions to $f,$ this criterion is generally not sufficient to certify that $\hat{\zeta }$ is an approximate solution to $f.$ In fact, the fixed points of $N_{f,\dagger}$ are precisely the solutions to the square system defined by
\begin{equation}\label{Eq:ODNewtonCrit}
g_i (z) := \partial_{z_i} (f_1^2 + \cdots + \cdots + f_N^2) = 0
\, \, \, \, \, \, \, \, \, \, 
(1 \le i \le n).
\end{equation}

Clearly the solutions to $g$ include the solutions to $f$: we thus say that $g$ is a \emph{square subsystem} of $f.$ Our point of view in this paper is that certification of approximate solutions may be possible if we are given a square subsystem of $f$---typically \emph{not} the system in Equation~\ref{Eq:ODNewtonCrit}---together with some global information about the excess solutions.

Alternate approaches to certification in the overdetermined case have been considered in previous work. In~\cite{AHS18} a hybrid symbolic-numeric approach is used when the polynomials in $f$ have rational coefficients. This requires computing an exact rational univariate representation~\cite{rouillier} and using that to certify approximate solutions. Alternatively, one may attempt to lift the approximation $\hat{\zeta}$ to an approximate solution to a square system in \emph{more} variables. This is the approach taken for Schubert problems in~\cite{HHS16,HS17}.

Though reduction to the case of a square subsystem is a natural idea, a suitable square subsystem $g$ and the requisite global information are not easily obtained in general. Rather than prescribing a single approach, we follow the pattern of this reduction through a series of algorithms and examples. We highlight how abstract tools such as liaison theory and Newton-Okounkov bodies may be brought to bear on certification, and illustrate certification for problems of interest in the Schubert calculus and computer vision.

\begin{remark}
\label{remark:family}
It is well-understood that perturbing an overdetermined system, eg.~by adding generic constants, will generally produce an
inconsistent system. Nevertheless, there are many \emph{families} of overdetermined polynomial systems such that a generic
member of the family has finitely many isolated solutions. In other words, a family of overdetermined systems need not be
\emph{over-constrained}.
In general, a family of polynomial systems specified by some parameters $p\in \CC^m$ may be understood via the incidence variety
$$
V_f = \{ (z,p ) \in \CC^n \times \CC^m \mid f(z,p) = 0 \}.
$$
The family is \emph{well-constrained} if the projection onto $\CC^m$ is dominant and $\dim V_f =m.$ The examples in Sections~\ref{subsec:schubertExample} and~\ref{subsec:vision} fit naturally into the category of well-constrained families of overdetermined systems, where it is reasonable to seek certificates even for generic $p.$
\end{remark}


The algorithms in our paper address the problems below.\medskip

\noindent{\bf Problem 1.}
How may we certify that  a point $\zeta\in\CC^n$ is an approximate solution to $f$?\medskip

\noindent{\bf Problem 2.}
Suppose it is known that $f$ has $e$ solutions. 
How may we certify that a set $Z\subset\CC^n$ of $e$ points consists of approximate solutions to $f$?\medskip

In Section~\ref{sec:appxSols}, we recall various notions of ``approximate solution'' that have been considered in the literature; our Definitions~\ref{defn::appx-solution} and~\ref{defn::effective-appx-solution} encompass these various notions and apply just as well to the overdetermined case.
We also explain a simple exclusion criterion that indirectly certifies particular approximate solutions to a subsystem $g$ of $f$ as non-solutions to $f$.
In Section~\ref{sec:nonsol}, we explain how global information about the excess solutions to $g$ may be used to certify approximate solution to $f$ in the sense of Section~\ref{sec:appxSols}, thus solving Problems 1 and 2.
We also give an alternative approach for Problem 2 that incorporates global information about $f,$ and explain how the global information about $g$ can be calculated in terms of Khovanskii bases and their associated Newton-Okounkov bodies in Section~\ref{sec:nobody}.
Section~\ref{sec:liaison} discusses one further approach to Problem 1 that is based on liaison theory.
In Section~\ref{sec:examples}, we give three examples illustrating our algorithms.
One involves a finite Khovanskii basis, another is from the Schubert calculus, and a third is from computer vision.

\section{Approximate solutions}
\label{sec:appxSols}
Throughout, we will fix positive integers $n\leq N$.
All polynomials will lie in the ring $\CC[x_1,\dotsc,x_n]$.
We will  write $f$ for a system $f_1,\dotsc,f_N$ of $N$ polynomials.
The system $f$ is \demph{square} when $N=n$.

\begin{defn}
\label{defn::appx-solution}
A \demph{$\rho$-approximate solution} to a (possibly overdetermined) polynomial system $f$ is a triple
$(\hat{\zeta}, \rho , \calN_f)$, where $\hat{\zeta }\in \CC^n$,
$\rho \in \RR_{>0}$, and $\calN_{f}\colon U \to \CC^n$ is a map defined on some $U \subset \CC^n$ such that
\begin{itemize}
\item[1)] There exists $\zeta \in \variety (f)$ such that $\norm{\zeta - \hat{\zeta} } < \rho $, and
\item[2)] all iterates $\calN_f^k (\hat{\zeta} )$ are defined and the sequence $\calN_f^k (\hat{\zeta} )$
  converges to $\zeta$ as $k\to\infty$.
\end{itemize}
\end{defn}
Here $\norm{\cdot}$ indicates the usual Hermitian norm on $\CC^n$.
We will refer to $\hat{\zeta}$ as an approximate solution when the procedure $\calN_f$
and constant $\rho$ are understood.
We call the point $\zeta \in \variety (f)$ in (1) the solution to $f$ \demph{associated} to $\hat{\zeta}$. We sometimes refer to $\mathcal{N}_f$ as a \demph{refinement operator}. This is typically some incarnation of Newton's method.

\begin{remark}
  \label{remark::field}
In our examples, the system $f$ and the approximate solution
  $\hat{\zeta }$ are defined over  the rationals $\mathbb{Q}$ or the Gaussian rationals $\QQ [\sqrt{-1}]$.
  Since numerical solvers typically output floating point results, care must be taken to control rounding errors when
  computing certificates.
  One option for certification is to perform all subsequent operations in rational arithmetic.
  Interval and ball arithmetic give yet another approach (discussed in Subsection~\ref{SS:other}).
  A ``certificate'' obtained without controlling rounding errors may still be of practical value.
  Following~\cite{HS12}, we  call this a \demph{soft certificate}.  
\end{remark}

  \begin{remark}
  \label{remark::computability}
In practice, the map $\calN_f$ in Definition~\ref{defn::appx-solution} should restrict to a computable
   function $\calN_{f,\QQ}\colon U \cap \QQ[\sqrt{-1}]^n \to \QQ[\sqrt{-1}]^n$.
   Our algorithms assume an oracle for $\calN_f,$ and we generally take a naive approach to questions of computability and complexity. However, we do not rely on special features of
   nonstandard models of computation such as the Blum-Shub-Smale machine~\cite{BCSS}. 
\end{remark}



Let $g$ be a square sytem and $(\hat{\zeta},\rho,\calN_g)$ be an approximate solution to $g$ with
associated solution $\zeta$.
Our main concern is to certify that $\zeta \in \variety (f)$
when $g$ is a square subsytem of $f$---a seemingly difficult task {\it a priori.}
It is however relatively simple to certify that $\zeta $ is {\it not} a solution to
a single polynomial $f$, provided that $\rho $ is sufficiently small.
For $k\in\NN$, let $S^k\CC^n$ be the $k$th symmetric power of $\CC^n$.
This has a norm $\norm{\cdot}$ dual to the standard unitarily invariant norm on homogeneous polynomials, and which
satisfies $\|z^k\|\leq \|z\|^k$, for $z\in\CC^n$. 
The $k$-th derivative of $g$ at $\zeta$ is a linear map $\diff{g}{k}{\zeta}: S^k(\CC^n) \to \CC^n$ with operator norm,
 \begin{equation}
  \label{op_norm}
   \norm{ \diff{g}{k}{\zeta} }\ :=\  \displaystyle\max_{
    \substack{
      w \in S^k\CC^n
      \\[0.25em]
      \norm{w}=1}
      }
    \norm{\diff{g}{k}{\zeta} \, (w)}\,.
\end{equation}

\begin{prop}
  \label{prop:reject}
  Suppose that $(\hat{\zeta }, \rho , \calN_g)$ is an approximate solution to a square polynomial system $g$ with
  associated solution $\zeta$.
  For any polynomial $f$, if 
  \begin{equation}
    \label{eq:taylor_bound}
      |f (\hat{\zeta })| - \displaystyle\sum_{k=1}^{\deg f_j}
      \displaystyle\frac{\norm{\diff{f}{k}{\hat{\zeta}}}}{k!}
      \cdot \rho^k
      \ >\ 0\,,
  \end{equation}
then $f(\zeta)\neq 0$.
\end{prop}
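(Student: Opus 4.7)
The plan is to derive a strictly positive lower bound on $|f(\zeta)|$ from the hypothesis~\eqref{eq:taylor_bound}. Since $f$ is a polynomial of degree $d := \deg f$, its Taylor expansion about $\hat{\zeta}$ terminates and has no remainder:
$$f(\zeta) \;=\; f(\hat{\zeta}) \;+\; \sum_{k=1}^{d} \frac{1}{k!}\, \diff{f}{k}{\hat{\zeta}}\bigl((\zeta - \hat{\zeta})^k\bigr).$$
The argument then reduces to combining this identity with the reverse triangle inequality and operator-norm bounds.

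Starting from the Taylor expansion and applying $|f(\zeta)| \geq |f(\hat{\zeta})| - \sum_k (\cdots)$, I would bound each summand as
$$\bigl|\diff{f}{k}{\hat{\zeta}}\bigl((\zeta - \hat{\zeta})^k\bigr)\bigr| \;\leq\; \norm{\diff{f}{k}{\hat{\zeta}}} \cdot \norm{(\zeta - \hat{\zeta})^k} \;\leq\; \norm{\diff{f}{k}{\hat{\zeta}}} \cdot \norm{\zeta - \hat{\zeta}}^k \;<\; \norm{\diff{f}{k}{\hat{\zeta}}} \cdot \rho^k.$$
Here the first inequality is the operator-norm estimate from~\eqref{op_norm} (applied to the scalar-valued polynomial $f$), the second uses the stated bound $\|z^k\| \leq \|z\|^k$ on the symmetric-power norm, and the third uses condition~(1) of Definition~\ref{defn::appx-solution}, namely $\norm{\zeta - \hat{\zeta}} < \rho$.

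Substituting these bounds term by term yields
$$|f(\zeta)| \;>\; |f(\hat{\zeta})| \;-\; \sum_{k=1}^{d} \frac{\norm{\diff{f}{k}{\hat{\zeta}}}}{k!}\, \rho^k,$$
and the right-hand side is positive by hypothesis~\eqref{eq:taylor_bound}, so $f(\zeta) \neq 0$. There is no real obstacle in this argument; it is essentially a matter of calibrating constants. The only subtlety worth flagging is the compatibility of the norm on $S^k\CC^n$ with both the operator norm defined in~\eqref{op_norm} and the submultiplicativity $\|z^k\| \leq \|z\|^k$, but the paper has already fixed the norm dual to the unitarily invariant norm on homogeneous polynomials precisely so that both hold. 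It is also worth noting that the specific refinement operator $\calN_g$ plays no role beyond providing the bound $\norm{\zeta-\hat{\zeta}}<\rho$ implicit in the notion of an approximate solution, so the same statement would apply with any other refinement operator satisfying Definition~\ref{defn::appx-solution}.
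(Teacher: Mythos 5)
Your proof is correct and is exactly the Taylor-expansion argument that the paper compresses into a single line ("By Taylor expansion, $f(z)\ne 0$ for any $z\in B(\hat\zeta,\rho)$"); you simply make explicit the reverse triangle inequality, the operator-norm estimate, and the submultiplicativity $\|z^k\|\le\|z\|^k$, and you correctly read the paper's typo $\deg f_j$ as $\deg f$. The only nit: in the chain of bounds the middle step should really be $\norm{\diff{f}{k}{\hat\zeta}}\cdot\norm{\zeta-\hat\zeta}^k \le \norm{\diff{f}{k}{\hat\zeta}}\cdot\rho^k$ (not strict), since the factor $\norm{\diff{f}{k}{\hat\zeta}}$ may vanish for some $k$; this is harmless because the strict inequality you ultimately need is supplied by hypothesis~\eqref{eq:taylor_bound} rather than by the term-wise bounds.
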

\begin{proof}
By Taylor expansion, it follows that $f(z)\ne 0$ for any $z\in B(\hat{\zeta} , \rho )$.
\end{proof}

Let us write \defcolor{$\delta(f,g, \hat{\zeta})$} for the difference in the inequality~\eqref{eq:taylor_bound}, which we
will call a \demph{Taylor residual}.
Note the implicit dependence on $\rho $ in Definition~\ref{defn::appx-solution}.
If $f=(f_1,\dotsc,f_N)$ is a polynomial system, then we define its Taylor residual $\delta(f,g, \hat{\zeta})$ to be
maximum of the Taylor residuals $\delta(f_i,g, \hat{\zeta})$, for $i=1,\dotsc,N$.
For this test of nonvanishing using Taylor residuals to be practical, we need to estimate the operator norms of the
higher derivatives.
One possible bounding strategy,  as explained in~\cite[\S I-3]{SS93} and~\cite[\S1.1]{HS12}, uses the first derivative
alone.
Another option, less suitable for polynomials of high degree, is to bound with the entry-wise $\ell_2$ or $\ell_1$ norms of these tensors.

A consequence of Definition~\ref{defn::appx-solution} is that each iterate $\calN_f^k (\hat{\zeta})$ is an approximate
solution, as $\calN_f^k (\hat{\zeta }) \to \zeta $.
We wish to quantify this rate of convergence.
The triangle inequality gives a test for when approximate solutions $(\hat{\zeta_1}, \rho_1 , \calN_f)$ and
$(\hat{\zeta_2 }, \rho_2, \calN_f)$ have distinct associated solutions, namely if 
\begin{equation}
  \label{eq::distinct}
  \norm{\hat{\zeta_1} - \hat{\zeta_2}}\ >\ \rho_1 + \rho_2\,.
  \end{equation}
  It is useful to have some additional criterion when two approximate solutions have the {\it same} associated solutions,
  that is, we wish to certify {\it uniqueness} of the associated solution in a sufficiently small region.
  This motivates our next definition.


\begin{defn}
\label{defn::effective-appx-solution}
An \emph{effective approximate solution} $(\hat{\zeta}, \calN_f, \rho, k_*)$ to a system $f$ consists of a weakly decreasing
rate function $\rho \colon \mathbb{N} \to \mathbb{R}_{>0}$ with
$\lim_{k\to\infty}\rho(k)=0$, and an integer $k_*$ such that\vspace{-5pt} 
\begin{itemize}
\item[1)] $\hat{\zeta}$ is a $\rho (0)$-approximate solution to $f$ with associated solution $\zeta $,
\item[2)] $\norm{\calN_f^{j} (\hat{\zeta }) - \zeta } < \rho (k)$ for all $j\ge k$, and
\item[3)] For some iterate $k_*$, $\zeta $ is the unique solution in the ball $B\left(\calN_f^{(k_*)} (\hat{\zeta }),\,  2\, \rho (k_*)\right)$.
  \end{itemize}
We say the rate of convergence for the effective approximate solution has order $\rho (k)$.
\end{defn}

The rate of convergence is \emph{quadratic} when  $\rho (k) = 2^{- 2^{O(k)} } \, \norm{\zeta - \hat{\zeta } }$.
This implies that each application of $\calN_f(\cdot)$ roughly doubles the number of significant digits in $\hat{\zeta}$.
We generalize the method for certifying distinct solutions in \cite[\S I-2]{HS12}.

\begin{prop}
  \label{prop:separating}
Given a set of effective approximate solutions $S'=\{ (\hat{\zeta_i}, \calN_f, \rho_i, k_*^i) \}$ to a system $f$, we
  may compute a set $S$ of refined approximate solutions with distinct associated solutions comprising all solutions
  associated to the set $S'$. 
\end{prop}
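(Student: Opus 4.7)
The plan is an iterative pairwise refine-and-classify procedure. Given $S'$, I would repeatedly apply $\calN_f$ to each $\hat\zeta_i$ and then, for every pair $(i,j)$, decide in finite time whether the associated solutions $\zeta_i,\zeta_j$ coincide or differ. After all $\binom{|S'|}{2}$ pairs have been classified, output one sufficiently refined representative per equivalence class.

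First I would note that each iterate is again an effective approximate solution with the same associated solution: replacing $\hat\zeta_i$ by $\calN_f^{k}(\hat\zeta_i)$ and the rate function $\rho_i$ by the shifted rate $k\mapsto \rho_i(k+k_0)$ preserves conditions (1)--(3) of Definition~\ref{defn::effective-appx-solution}. This is what allows us to drive the enclosing radii down on demand, and in particular it lets us replace the separation criterion~\eqref{eq::distinct} by its refined version $\|\calN_f^{k}(\hat\zeta_i)-\calN_f^{k}(\hat\zeta_j)\|>\rho_i(k)+\rho_j(k)$.

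Next, for each pair $(i,j)$, I would alternate two tests while incrementing $k$:
\textbf{(Separation)} If $\|\calN_f^{k}(\hat\zeta_i)-\calN_f^{k}(\hat\zeta_j)\|>\rho_i(k)+\rho_j(k)$, the triangle inequality forces $\zeta_i\neq \zeta_j$.
\textbf{(Merging)} If $k\geq k_*^i$ and $\|\calN_f^{k}(\hat\zeta_j)-\calN_f^{k_*^i}(\hat\zeta_i)\|+\rho_j(k)<2\rho_i(k_*^i)$, then $\zeta_j$ lies in the ball $B(\calN_f^{k_*^i}(\hat\zeta_i),\,2\rho_i(k_*^i))$, so condition~(3) of Definition~\ref{defn::effective-appx-solution} yields $\zeta_j=\zeta_i$. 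Termination follows from the two cases: if $\zeta_i\neq \zeta_j$, then since $\rho_i(k),\rho_j(k)\to 0$, eventually $\rho_i(k)+\rho_j(k)<\tfrac{1}{3}\|\zeta_i-\zeta_j\|$ and the triangle inequality forces the separation test to succeed. If $\zeta_i=\zeta_j$, then both refined iterates converge to this common point, so the left-hand side of the merging inequality tends to $\|\calN_f^{k_*^i}(\hat\zeta_i)-\zeta_i\|<\rho_i(k_*^i)<2\rho_i(k_*^i)$, and the merging test eventually succeeds. Once the equivalence classes are known, I output one member of each, optionally iterating further to shrink the rate to any desired tolerance.

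The main obstacle is the merging step: verifying that condition~(3) actually certifies $\zeta_j=\zeta_i$ requires knowing $\zeta_j$ lies in a ball whose \emph{center} and \emph{radius} were fixed at iterate $k_*^i$, while $\calN_f^{k}(\hat\zeta_j)$ only approximates $\zeta_j$ to radius $\rho_j(k)$. The inequality in the merging test is designed precisely to absorb this slack, but one must track the shift of centers carefully and confirm that the ``unique solution'' clause of Definition~\ref{defn::effective-appx-solution}(3) applies to \emph{any} solution in the ball, not just to one reached by the $\hat\zeta_i$-iteration. A minor secondary subtlety is that computing $\rho_i(k)$ and the norm comparisons must be done in a verified arithmetic (cf.\ Remark~\ref{remark::field}) for the certificate to be rigorous rather than soft.
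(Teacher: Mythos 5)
Your proof is correct, and it is in fact more careful than the paper's. The paper's proof is a one-liner: replace each $\hat{\zeta_i}$ with $\calN_f^{k_*^i}(\hat{\zeta_i})$ and assert that afterward the associated solutions are distinct if and only if inequality~\eqref{eq::distinct} holds. The forward direction of that biconditional is immediate from the triangle inequality, exactly as in your separation test; but the reverse direction---that overlapping balls force $\zeta_i=\zeta_j$---is not true at this level of generality. Setting $\hat z_i=\calN_f^{k_*^i}(\hat{\zeta_i})$ and $r_i=\rho_i(k_*^i)$, overlap $\|\hat z_i-\hat z_j\|< r_i+r_j$ only gives $\|\hat z_i - \zeta_j\|<r_i+2r_j$, which need not place $\zeta_j$ inside the uniqueness ball $B(\hat z_i,2r_i)$ unless one radius dominates the other (say $r_i\ge 2r_j$). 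So the paper's proof is incomplete as written, and your merging criterion is exactly the missing ingredient.

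Your merging test repairs this by keeping the uniqueness ball's center $\calN_f^{k_*^i}(\hat{\zeta_i})$ and radius $2\rho_i(k_*^i)$ fixed (as condition (3) of Definition~\ref{defn::effective-appx-solution} requires) while shrinking only $\rho_j(k)$, so that $\|\calN_f^{k}(\hat{\zeta_j})-\calN_f^{k_*^i}(\hat{\zeta_i})\|+\rho_j(k)<2\rho_i(k_*^i)$ certifiably places $\zeta_j$ in that ball, hence $\zeta_j=\zeta_i$. Your termination argument for both branches (separation eventually succeeds when $\zeta_i\neq\zeta_j$; merging eventually succeeds when $\zeta_i=\zeta_j$ because the LHS tends to $\|\calN_f^{k_*^i}(\hat{\zeta_i})-\zeta_i\|<\rho_i(k_*^i)$) is sound, and your remark that the resulting dichotomy must be decided in verified arithmetic to yield a hard rather than soft certificate is the right caveat. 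One very small simplification: the guard $k\geq k_*^i$ in the merging test is unnecessary, since the uniqueness ball is pinned at iterate $k_*^i$ regardless of how far you have refined $\hat{\zeta_j}$. Overall, you have supplied a complete alternating refine-and-classify procedure where the paper gives only a terse sketch whose ``if and only if'' claim does not literally hold for all radius ratios; your version is the one that should be believed.
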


\begin{proof}
We need only replace each $\hat{\zeta_i}$ with its refinement $\calN_f^{k_*^i} (\hat{\zeta_i})$. After refinement, the
solutions associated to $\hat{\zeta_i}$ and $\hat{\zeta_j}$ are distinct if and only if inequality~\eqref{eq::distinct}
holds. 
\end{proof}

Proposition~\ref{prop:reject} may fail to certify that an extraneous solution $\zeta$ is not a solution to $f$.  However, if $\hat{\zeta}$ is an effective approximate solution for $\mathcal{N}_g ,$ then this test will succeed after sufficiently many refinements. 

%
\begin{cor}\label{cor:refine_and_reject}
Let $f=(f_1,\dotsc,f_N)$ be a system of polynomals, and suppose $(\hat{\zeta }, \mathcal{N}_g, \rho , k_*)$ is an effective approximate solution such that the associated solution $\zeta\not\in\variety(f)$.
There is a $k\geq 0$ such that the Taylor residuals $\delta\left(f,g, \calN_g^i(\hat{\zeta})\right)$ are positive for all $i\geq k$.
\end{cor}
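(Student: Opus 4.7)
The plan is to reduce to the single-polynomial case of Proposition~\ref{prop:reject} by picking a component of $f$ which does not vanish at $\zeta$, and then to show that the corresponding Taylor residual tends to a strictly positive limit along the sequence of Newton iterates. Since $\delta(f, g, \cdot)$ is defined as the maximum of the component residuals $\delta(f_j, g, \cdot)$, positivity for a single component suffices.

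Concretely, since $\zeta \notin \variety(f)$, I pick an index $j \in \{1,\dotsc,N\}$ with $f_j(\zeta) \neq 0$ and fix it. By item (2) of Definition~\ref{defn::effective-appx-solution}, the $i$-th iterate $\mathcal{N}_g^i(\hat{\zeta})$ lies in $B(\zeta, \rho(i))$ and is therefore a $\rho(i)$-approximate solution to $g$ with associated solution $\zeta$. Hence the Taylor residual at the $i$-th iterate is
\[
\delta\bigl(f_j, g, \mathcal{N}_g^i(\hat{\zeta})\bigr)
\;=\;
\bigl|f_j(\mathcal{N}_g^i(\hat{\zeta}))\bigr|
\,-\,
\sum_{k=1}^{\deg f_j}\frac{\norm{\diff{f_j}{k}{\mathcal{N}_g^i(\hat{\zeta})}}}{k!}\,\rho(i)^k.
\]

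I then analyze the behavior as $i \to \infty$. Because $\mathcal{N}_g^i(\hat{\zeta}) \to \zeta$, continuity of $f_j$ gives $|f_j(\mathcal{N}_g^i(\hat{\zeta}))| \to |f_j(\zeta)| > 0$. Each operator norm $\|\diff{f_j}{k}{\cdot}\|$ is continuous in the base point, so these norms are bounded on some neighborhood of $\zeta$ containing the tail of the sequence. Since $\rho$ is weakly decreasing with $\rho(i) \to 0$, each term $\rho(i)^k$ vanishes in the limit, and hence the entire finite sum tends to $0$. Consequently $\delta(f_j, g, \mathcal{N}_g^i(\hat{\zeta})) \to |f_j(\zeta)| > 0$, so there exists $k \geq 0$ such that $\delta(f_j, g, \mathcal{N}_g^i(\hat{\zeta})) > 0$—and therefore $\delta(f, g, \mathcal{N}_g^i(\hat{\zeta})) > 0$—for all $i \geq k$.

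The argument is essentially a continuity-plus-limit computation, so I do not anticipate a serious obstacle. The only mildly technical point is to observe that each iterate inherits the structure of a $\rho(i)$-approximate solution with radius given by the rate function, so Proposition~\ref{prop:reject} applies at every stage of the sequence; this is immediate from item (2) of Definition~\ref{defn::effective-appx-solution}.
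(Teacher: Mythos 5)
Your proof is correct and follows essentially the same route as the paper: both argue via Taylor expansion that the residual at $\mathcal{N}_g^k(\hat{\zeta})$ is eventually positive and then use the weak decrease of $\rho$ to control later iterates. You make the limit argument fully explicit — identifying a component $f_j$ with $f_j(\zeta)\neq 0$, showing $\delta(f_j,g,\mathcal{N}_g^i(\hat{\zeta}))\to|f_j(\zeta)|>0$ by continuity of the derivative norms and $\rho(i)\to 0$ — where the paper's own proof compresses the ``remain positive for all $i\geq k$'' step into a single clause; your version is the more self-contained reading of that same argument.
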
  

\begin{proof}

Since $\mathcal{N}_g^j (\hat{\zeta })\to \zeta,$ we may argue that $\delta \left(f, g, \mathcal{N}_g^k (\hat{\zeta } )\right) > 0$ for some $k$ by Taylor expansion as in Proposition~\ref{prop:reject}. Since $\rho (\cdot )$ is weakly decreasing, it follows from 2) in Definition~\ref{defn::effective-appx-solution} that the Taylor residuals remain positive for all $i\ge k.$
\end{proof}  

Certificates for square systems are generally based on Newton's method.
We now observe that Definitions~\ref{defn::appx-solution} and~\ref{defn::effective-appx-solution} encapsulate several
existing certification paradigms for square systems. 

\subsection{Smale's $\alpha$-theory}\label{SS:alpha}
The central quantities of Smale's $\alpha$-theory are defined as follows.  
With $g$ as above and $\hat{\zeta}\in \CC^n$ a point where $Dg(\hat{\zeta})$ is invertible,
\begin{align} 
  \alpha (g,\hat{\zeta})\ &:=\ \beta(g,\hat{\zeta}) \cdot \gamma (g,\hat{\zeta})\  \text{, \ where}\nonumber\\
  \beta (g,\hat{\zeta})\ &:=\ \norm{\hat{\zeta} - N_g (\hat{\zeta})}
                           \ =\  \norm{ Dg(\hat{\zeta})^{-1} g(\hat{\zeta})}\  \text{, \  and}\\
  \gamma (g,\hat{\zeta})\ &:=\ \nonumber
                 \sup_{k\geq 2}\left\|\frac{Dg(\hat{\zeta})^{-1}\diff{g}{k}{\hat{\zeta}} }{k!}\right\|^{\frac{1}{k-1}}\  .
\end{align}
Note that $\beta(g,\hat{\zeta})$ is the length of a Newton step at $\hat{\zeta}$.
The following proposition gives a criterion for approximate solutions in the sense of Definition~\ref{defn::appx-solution}.

\begin{prop} [{\cite[p.~160]{BCSS}}]
  \label{prop:alpha}
  Let $g$ be a square polynomial system and $\hat{\zeta}\in\CC^n$.
  If
  \[
    \alpha(g,\hat{\zeta})\ <\ \frac{13-3\sqrt{17}}{4}\ \approx\ 0.15767078\,,
  \]
  then $\hat{\zeta}$ is $2 \beta (g, \hat{\zeta})$-approximate solution to $g$ and the Newton iterates $N_g^k (\hat{\zeta })$
  converge quadratically. 
\end{prop}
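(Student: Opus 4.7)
The plan is to bootstrap from a ``gamma theorem'' (which bounds Newton convergence in terms of the distance $\|\hat{\zeta} - \zeta\|$, typically unknown in practice) to the \emph{a posteriori} alpha criterion, which replaces that distance by the computable quantity $\beta(g,\hat{\zeta})$.

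First I would establish the gamma theorem. Expanding $g(\zeta) = 0$ in a Taylor series about $\hat{\zeta}$ and solving for $N_g(\hat{\zeta}) - \zeta$ yields
\[
  N_g(\hat{\zeta}) - \zeta \;=\; \sum_{k\ge 2} \frac{Dg(\hat{\zeta})^{-1}\, \diff{g}{k}{\hat{\zeta}}}{k!}\,(\zeta - \hat{\zeta})^k.
\]
Inserting the operator bound $\|Dg(\hat{\zeta})^{-1}\diff{g}{k}{\hat{\zeta}}/k!\| \le \gamma^{k-1}$ implicit in the definition of $\gamma = \gamma(g,\hat{\zeta})$, and summing the geometric series in $u := \gamma\,\|\zeta - \hat{\zeta}\|$, gives
\[
  \|N_g(\hat{\zeta}) - \zeta\| \;\le\; \frac{u}{1-u}\,\|\hat{\zeta} - \zeta\|, \qquad u<1.
\]
Tracking the evolution of $u_k := \gamma(g, N_g^k\hat{\zeta}) \cdot \|N_g^k\hat{\zeta} - \zeta\|$ (here a separate lemma bounds how $\gamma$ changes under the Newton step in terms of $u_k$) one obtains a recursion of the form $u_{k+1} \le u_k^2 / (1-u_k)^2$, which gives quadratic convergence as soon as $u_0$ lies below a universal threshold.

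Next I would convert the gamma theorem into the alpha condition by means of Smale's majorant. Define the real rational function
\[
  \psi(t) \;=\; \beta - t + \frac{\gamma t^2}{1 - \gamma t},
\]
whose Newton iteration starting from $0$ majorizes, coordinate by coordinate, the behavior of Newton iteration on $g$ starting from $\hat{\zeta}$. Its smallest positive root $t_*$ therefore satisfies $\|\hat{\zeta} - \zeta\| \le t_*$. Solving $\psi(t_*) = 0$ explicitly in terms of $\alpha = \beta\gamma$ shows that $t_* \le 2\beta$ and $\gamma t_* < $ threshold of Step~1 precisely when
\[
  \alpha \;<\; \frac{13 - 3\sqrt{17}}{4}.
\]
This yields the claimed $\rho = 2\beta$ in Definition~\ref{defn::appx-solution}, and combining with Step~1 gives quadratic convergence of $N_g^k(\hat{\zeta})$.

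The main obstacle is the majorant bookkeeping of Step~2: one must verify not only that the initial iterate lies in the convergence basin of Step~1 but that \emph{every} subsequent iterate does so, and that the majorant inequalities propagate along the whole sequence. This is where the peculiar constant $(13-3\sqrt{17})/4$ arises, as the largest $\alpha$ for which the discriminant condition on $\psi$ together with the invariance of the convergence region under one Newton step is simultaneously satisfied. The remaining steps (Taylor expansion, geometric series summation, and the final $\|\hat{\zeta}-\zeta\| < 2\beta$ bound) are routine.
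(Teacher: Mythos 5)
The paper does not prove Proposition~\ref{prop:alpha}; it is quoted from Blum--Cucker--Shub--Smale with no argument supplied. So there is no in-paper proof to compare against, but your sketch is a fair reconstruction of the proof in that reference: derive a $\gamma$-theorem from the Taylor expansion of $g$ about the current point, dominate the Newton orbit of $g$ started at $\hat\zeta$ by the Newton orbit of the univariate majorant $h_{\beta,\gamma}(t)=\beta-t+\gamma t^2/(1-\gamma t)$ started at $0$, and read off both $\|\hat\zeta-\zeta\|\le t_*\le 2\beta$ and the threshold on $\alpha$ from the smaller root $t_*=\bigl((1+\alpha)-\sqrt{(1+\alpha)^2-8\alpha}\,\bigr)/(4\gamma)$ of $h_{\beta,\gamma}$. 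This is the right plan and does yield the stated conclusion.

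One imprecision in Step~1 is worth flagging, because it is exactly where the odd constant comes from. The recursion $u_{k+1}\le u_k^2/(1-u_k)^2$ is what the geometric-series estimate alone gives, but it closes only if $\gamma(g,\cdot)$ were controlled along the orbit by the factor $1/(1-u_k)$, which is too optimistic. The lemma you treat as a black box in fact gives $\gamma(g,z')\le \gamma(g,z)\big/\bigl((1-u)\,\psi(u)\bigr)$ with $\psi(u)=2u^2-4u+1$ and $u=\gamma(g,z)\|z-z'\|$; the $\psi$-factor comes from a Neumann-series bound on $Dg(z')^{-1}Dg(z)$, not from the series in Step~1. It is this $\psi$ that produces the $\gamma$-theorem radius $(3-\sqrt7)/2$ and, after feeding $t_*$ back through the change-of-$\gamma$ lemma, the threshold $(13-3\sqrt{17})/4$. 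With only the $(1-u)^2$ denominator the bookkeeping does not land on that constant, so the lemma you defer carries more of the argument than your recursion suggests and must be stated with the correct $\psi$ for the numbers to work out.
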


Criteria for quadratically convergent effective approximate solutions in the sense of
Definition~\ref{defn::effective-appx-solution} can also be given in terms of $\alpha (g, \hat{\zeta })$.
The analysis amounts to showing that $N_g$ is a contraction mapping in a suitable neighborhood of $\zeta$.
This is given by the ``robust'' $\alpha$-theorem (Theorem 6 and Remark 9 of~\cite[Ch.~8]{BCSS}).

\begin{prop}
  \label{prop:same}
  Let $g$ be a square polynomial system and $\hat{\zeta}\in\CC^n$ an approximate solution to $g$ with associated solution
  $\zeta$ and suppose that $\alpha(g,\hat{\zeta})<0.03$.
  If $\hat{\zeta}'\in \CC^n$ satisfies
  \[
    \| \hat{\zeta} - \hat{\zeta}'\|\ <\ \frac{1}{20\gamma(g,\hat{\zeta})}\,,
  \]  
  then $\hat{\zeta}'$ is an
  approximate solution to $g$ with associated solution $\zeta$. 
\end{prop}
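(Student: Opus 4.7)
The plan is to derive Proposition~\ref{prop:same} from the robust $\alpha$-theorem cited just above it, namely Theorem~6 (and Remark~9) of \cite[Ch.~8]{BCSS}. The robust $\alpha$-theorem states that there is an absolute constant $\alpha_0$ (with $0.03 < \alpha_0 < (13-3\sqrt{17})/4$) and an explicit radius $r(\gamma) \asymp 1/\gamma(g,\hat{\zeta})$ such that every point $\hat{\zeta}'$ in the ball $B(\hat{\zeta}, r(\gamma))$ satisfies $\alpha(g,\hat{\zeta}') < (13-3\sqrt{17})/4$, and hence is an approximate solution to $g$ by Proposition~\ref{prop:alpha}. Our task is therefore the quantitative check that the specific constants $0.03$ and $1/(20 \gamma(g,\hat{\zeta}))$ fall inside the regime of applicability, and then a separate argument that both iteration sequences converge to the \emph{same} solution $\zeta$.

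First, I would invoke the standard perturbation estimates for $\beta$ and $\gamma$ under change of base point. Writing $u := \|\hat{\zeta}-\hat{\zeta}'\|\,\gamma(g,\hat{\zeta})$, the bounds from \cite[Ch.~8, Prop.~3]{BCSS} take the form $\gamma(g,\hat{\zeta}') \le \gamma(g,\hat{\zeta})/\psi_1(u)$ and $\beta(g,\hat{\zeta}') \le \psi_2(u)\,\beta(g,\hat{\zeta}) + \psi_3(u)\,\|\hat{\zeta}-\hat{\zeta}'\|$, where the $\psi_i$ are explicit rational functions of $u$ derived from the geometric-series majorant for Newton's method. Substituting $u = 1/20$ yields finite, explicit multiplicative constants. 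Multiplying the two resulting bounds and using $\alpha(g,\hat{\zeta}) < 0.03$ together with $\|\hat{\zeta}-\hat{\zeta}'\|\,\gamma(g,\hat{\zeta}) < 1/20$, I would verify by a short numerical computation that $\alpha(g,\hat{\zeta}') < (13-3\sqrt{17})/4$, so that Proposition~\ref{prop:alpha} certifies $\hat{\zeta}'$ as an approximate solution.

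It then remains to show that the associated solution of $\hat{\zeta}'$ is $\zeta$ itself, not some other root of $g$. For this I would use a contraction-mapping argument in a neighborhood of $\zeta$: the hypothesis $\alpha(g,\hat{\zeta}) < 0.03$ implies $\|\hat{\zeta}-\zeta\|$ is $O(\beta(g,\hat{\zeta}))$ and that $\gamma(g,\zeta)$ is comparable to $\gamma(g,\hat{\zeta})$, so there is an explicit ball $B(\zeta, c/\gamma(g,\hat{\zeta}))$ (for some universal $c$) in which $\zeta$ is the unique solution to $g$ and on which the Newton operator $N_g$ is a contraction. The bound $\|\hat{\zeta}-\hat{\zeta}'\| < 1/(20\gamma(g,\hat{\zeta}))$ together with $\|\hat{\zeta}-\zeta\| \le 2\beta(g,\hat{\zeta}) \le 2\alpha(g,\hat{\zeta})/\gamma(g,\hat{\zeta}) < 0.06/\gamma(g,\hat{\zeta})$ places $\hat{\zeta}'$ in this ball by the triangle inequality, so its Newton iterates converge to the unique fixed point $\zeta$ in the ball.

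The main obstacle is purely arithmetical: checking that the constants $0.03$ and $1/20$ are simultaneously small enough that (i) the perturbation bounds $\psi_i(1/20)$ yield $\alpha(g,\hat{\zeta}')$ below the Smale threshold and (ii) $\hat{\zeta}'$ lies inside the uniqueness/contraction ball around $\zeta$. These checks are routine evaluations of explicit rational functions at $u=1/20$, and the slack in the chosen constants is exactly what is referenced by Remark~9 of \cite[Ch.~8]{BCSS}. I would present the calculation as a single displayed chain of inequalities rather than isolating each perturbation lemma.
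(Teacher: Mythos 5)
Your approach matches the paper's: Proposition~\ref{prop:same} is stated without proof, the paper simply pointing to the robust $\alpha$-theorem (Theorem~6 and Remark~9 of \cite[Ch.~8]{BCSS}), which already contains essentially these constants, so the proposition is a restatement of that result rather than a corollary requiring a fresh quantitative check. You correctly identify the same citation and then go further than the paper by reconstructing the internal argument from BCSS --- the perturbation estimates for $\beta$ and $\gamma$, the threshold check against $(13-3\sqrt{17})/4$, and the contraction/uniqueness step placing $\hat{\zeta}'$ in the basin of $\zeta$ --- which is a sound, more self-contained way to justify the stated constants even though the paper does not attempt it.
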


It follows that, for $\rho (k) = 2^{-2^{k-1}} \beta (\hat{\zeta})$, we have that $(\hat{\zeta }, N_g, \rho , 0)$ is an
effective approximate solution in the sense of Definition~\ref{defn::effective-appx-solution}.

\subsection{Other approaches}\label{SS:other}

The classical analysis of Newton's method is due to Kantorovich~\cite{Kan82}.
Several variations of Kantorovich's theorem exist, typically assuming some local Lipchitz condition on the Jacobian $D_g$ and boundedness conditions on
$D_g (\hat{\zeta })^{-1}$.
Certificates based on Kantorovich's theorem thus rely on \emph{a priori} bounds in a region containing $\hat{\zeta}.$
Explicit bounds on the rate of convergence in terms of the Lipchitz and bounding constants
are given in various works~\cite{Tap71,TG74,Deuf11}.  
We refer to~\cite{LS16} for a survey of variants and an explanation of the relationship between Kantorovich's theorem and $\alpha $-theory.

Approximate solutions may also be understood within the general program of interval and ball arithmetics.
Both paradigms rely on defining arithmetic operations on intervals or balls and are definable in
either exact or floating point arithmetic.
In general, operations on intervals represent \demph{enclosures}.
In exact interval arithmetic, we define the sum by $[a,b]+[c,d]=[a+c,b+d]$.
For floating point arithmetic, we may either accept a soft certificate or control rounding errors when defining arithmetic
operations so as to obtain a rigorous certificate.
We refer to~\cite{Kul14,MKRC09,Tuc11} for a more comprehensive treatment of these notions.
A variety of interval/ball-valued Newton iterations have been studied.
A popular variant is the Krawcyzk Method---see~\cite[Chapter 6]{MKRC09} for an introduction,~\cite{MJ77} for quadratic
convergence, and~\cite{BLL19} for extensions to complex analytic functions.
Once a Newton-like iteration is in place, we get criteria for approximate solutions in the sense of
definitions~\ref{defn::appx-solution} and~\ref{defn::effective-appx-solution} by taking $\hat{\zeta}$ to be the midpoint/center of the enclosing interval/ball.


\section{Certification via nonsolutions}\label{sec:nonsol}
In this section we consider certification in the setting where we have an overdetermined system given by $f_1,\ldots , f_N\in \CC [x_1,\ldots , x_n],$ a full set of approximate solutions to some square subsystem $g,$ and prior knowledge of
an integer $d$ such that
\begin{equation}
\label{eq:bound}
  d\ =\ \# \left( \variety (g) \setminus \variety (f)\right)\,.
\end{equation}

From this information, the Newton operator $N_g$ can be used to give an approximate solution to $f$ in the sense of
Definition~\ref{defn::appx-solution}. 
We make this precise in Section~\ref{sec:algs}; Algorithm~\ref{alg:ind} provides one possible solution to Problem 1 from
the introduction.
For Problem 2, we give an essentially different approach (Algorithm~\ref{alg:set}) that assumes knowledge of the number of 
solutions to $f$. 

Knowledge of $d$ may come from rigorous mathematical proof (eg.~the examples in~\ref{subsec:schubertExample}) or by some form of certified computation.
If all points in $\variety (g)$ are isolated, then $d$ is simply the degree of the saturated ideal $\langle g \rangle : \langle f \rangle^\infty $.
Thus, if we can compute Gr\"{o}bner bases for the ideals generated by both polynomial systems, then we have $d$ which is an
admissible input for Algorithms~\ref{alg:ind} and~\ref{alg:set}, which are given in this section.

Aside from addressing Problems 1 and 2, we explain another approach to computing $d$ in the special case
where $g$ is obtained by ``squaring up'', or \demph{randomization}~\cite{WS05}.
This means we have a suitably generic $n\times N$ matrix $A\in \CC^{n\times N}$ such that
\begin{equation}
\label{eq:mat}
  g\ :=\ 
\left( \begin{array}{c}
g_1(z)\\
\vdots \\
g_n(z)
\end{array}\right)\  =\ A \: \left( \begin{array}{c}
f_1(z)\\
\vdots \\
f_N(z)
\end{array}\right)\ =\ 0\,.
\end{equation}

For such $g,$ the number $d$ can sometimes be computed from Khovanskii bases (a generalization of SAGBI bases) for the
polynomial algebra in $n{+}1$ variables given by $\CC [t f_1, \ldots , t f_N]$.
We give an overview of this theory in Section~\ref{sec:nobody}.
An attractive feature of Khovanskii bases is that we may, in principle, work with the algebra $\CC [t f_1, \ldots , t f_N]$ itself rather than some presentation $\CC [x_1,\ldots, x_N]\to \CC [t\, f_1, \ldots , t\, f_N]$.
There is, however, a significant trade-off, which is that finite Khovanskii bases need not exist. Nevertheless, we feel that computation of Khovanskii bases deserves to be more thoroughly explored.
Certification is a particular application which may benefit from more efficient and robust computational tools for Khovanskii bases. 

\subsection{Certification algorithms}\label{sec:algs}

We now formulate our first algorithm for solving Problem 1. This is the content of Theorem~\ref{thm:ind}, based on Definition~\ref{defn::appx-solution}. Note that Algorithms~\ref{alg:ind} and~\ref{alg:set} assume pairwise distinct approximate solutions and explicit separating balls, respectively. By Proposition~\ref{prop:separating}, it is enough to require that the $\hat{\zeta_i}$ are effective approximate zeros and apply these algorithms after refinement.

\begin{algorithm}[Certifying individual solutions]\label{alg:ind}$ $\\
  \textbf{Input:} $(f,g, d, S )$
  \begin{itemize}
  \item[] $f$ --- a polynomial system
  \item[] $g$ --- a square subsystem of $f$
  \item[] $d \in \NN$ satisfying \eqref{eq:bound}
  \item[] $S = \{ \hat{\zeta_1 }, \ldots , \hat{\zeta_m} \}$ --- pairwise distinct approximate solutions to $g$ 
  \end{itemize}
  \textbf{Output:} $T\subset S,$ a set of approximate solutions to $f$
  \begin{algorithmic}
    \STATE Initialize $R\gets \emptyset $
    \STATE{\textbf{for } $j=1,\dotsc, m$ \textbf{ if } $\delta (f,g,\hat{\zeta_j}) >0$ \textbf{ then }
              $R\gets R \cup \{ \zeta_j \}$ }
    \STATE{\textbf{if } $(\# R == d)$ \textbf{ then } $T\gets S\smallsetminus R$ \textbf{, else } $T\gets \emptyset $}
      \RETURN $T$
  \end{algorithmic}
\end{algorithm}

\begin{remark}
\label{remark:boundd}
\emph{A priori,} we only need to know that $d \ge \# \left( \variety (g) \setminus \variety (f)\right)$---if the inequality is strict, we necessarily return an empty set.
\end{remark}

\begin{thm}
  \label{thm:ind}
  Suppose that $f,g,d,S$ are valid input for Algorithm~\ref{alg:ind}.
  Then its output consists of approximate solutions to $f$.
\end{thm}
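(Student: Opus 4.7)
The plan is a pigeonhole argument. If the algorithm returns $T = \emptyset$, there is nothing to show, so assume the \textbf{if} branch was taken, meaning the certified rejection set $R$ has cardinality exactly $d$. I would show that each $\hat\zeta_j \in T = S \setminus R$ is an approximate solution to $f$ in the sense of Definition~\ref{defn::appx-solution}, using $\calN_g$ itself as the refinement operator $\calN_f$.

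First I would record the consequences of the loop. For each $j \in \{1,\dots,m\}$, let $\zeta_j \in \variety(g)$ denote the associated solution of the approximate solution $\hat\zeta_j$. Since $S$ consists of pairwise distinct approximate solutions (in the sense discussed just before the algorithm, namely with distinct associated solutions via Proposition~\ref{prop:separating}), the map $\hat\zeta_j \mapsto \zeta_j$ is injective. If $\hat\zeta_j \in R$, then $\delta(f,g,\hat\zeta_j) > 0$, so some component $f_i$ satisfies $\delta(f_i,g,\hat\zeta_j) > 0$, and Proposition~\ref{prop:reject} applied to $f_i$ yields $f_i(\zeta_j) \neq 0$, hence $\zeta_j \notin \variety(f)$. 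Therefore $\{\zeta_j : \hat\zeta_j \in R\}$ is an injective image of $R$ sitting inside $\variety(g) \setminus \variety(f)$.

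Next I would invoke the hypothesis on $d$. The set $\variety(g)\setminus\variety(f)$ has cardinality exactly $d$ by~\eqref{eq:bound}, while $\#R = d$ by assumption. Combined with the injection from the previous paragraph, this forces
\[
\{ \zeta_j : \hat\zeta_j \in R \}\ =\ \variety(g)\setminus\variety(f)\,.
\]
Consequently, for any $\hat\zeta_j \in T$, the associated solution $\zeta_j$ lies in $\variety(g)$ but not in $\variety(g)\setminus\variety(f)$, so $\zeta_j \in \variety(f)$.

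Finally I would verify Definition~\ref{defn::appx-solution} for $\hat\zeta_j \in T$ using the triple $(\hat\zeta_j, \rho, \calN_g)$: condition (1) holds because $\|\zeta_j - \hat\zeta_j\| < \rho$ already holds as part of $\hat\zeta_j$ being an approximate solution to $g$, and $\zeta_j \in \variety(f)$ by the previous paragraph; condition (2) holds because the Newton iterates $\calN_g^k(\hat\zeta_j)$ converge to $\zeta_j$ by hypothesis, and we are free to take $\calN_f := \calN_g$ as the refinement operator for $f$ in Definition~\ref{defn::appx-solution}. I expect the only mildly delicate point is the bookkeeping around ``pairwise distinct''—one must make it explicit that distinctness of the $\hat\zeta_j$ forces distinctness of the $\zeta_j$, so that the pigeonhole count in $\variety(g)\setminus\variety(f)$ is tight; everything else is a direct application of Proposition~\ref{prop:reject} together with the definition.
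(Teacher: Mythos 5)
Your proposal is correct and follows essentially the same pigeonhole argument as the paper's proof: the $d$ rejected points in $R$ have distinct associated solutions, all certified non-solutions to $f$ by Proposition~\ref{prop:reject}, and hence by~\eqref{eq:bound} they exhaust $\variety(g)\setminus\variety(f)$, forcing the associated solutions of $T$ into $\variety(f)$. You are somewhat more explicit than the paper about the injectivity of $\hat\zeta_j\mapsto\zeta_j$ and about taking $\calN_g$ as the refinement operator in Definition~\ref{defn::appx-solution}, but the underlying argument is identical.
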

\begin{proof}
  If $T$ is empty there is nothing to prove.
  Otherwise, there are $d$ distinct solutions to $g$ associated to points of $R$---by Proposition~\ref{prop:reject}, these
  are not solutions to $f$. 
  Since the solutions associated to points of $T$ are disjoint from those associated to points of $R$, by assumption
  and~\eqref{eq:bound} they associate to solutions to $f$. 
\end{proof}


We now give a second algorithm using $\alpha$-theory to certify solutions to an overdetermined system $f$ to solve Problem 2.
Suppose that we have an overdetermined system $f$ that is known to have $e$ solutions whose square subsystems are
known to have $d$ solutions.
While we could apply Algorithm~\ref{alg:ind} to certify approximate solutions to $f$, we
propose an alternative method to solve this problem.

\begin{algorithm}[Certifying a set of solutions]\label{alg:set}$ $\\
  \textbf{Input:} $(d,e,f,g,g',S,S',B)$
  \begin{itemize}
  \item[] $e\leq d$ --- integers
  \item[] $f$ --- a polynomial system with $e$ solutions
  \item[] $g, g'$ --- two square subsystems of $f$
  \item[] $S= \{ \hat{\zeta_1}, \ldots , \hat{\zeta_d} \}$ --- a set of $d$ distinct approximate solutions to $g$
  \item[] $B= \{ B(\hat{\zeta_1}, \rho_1 ) , \,  \ldots ,\, B(\hat{\zeta_d}, \rho_d) \} $
    --- disjoint balls separating elements of $S$.
  \item[] $S'$ --- a set of $d$ distinct approximate solutions to $g'$
  \end{itemize}
  \textbf{Output:} $T\subset S$, a set of approximate solutions to $f$
  \begin{algorithmic}[1]
    \STATE Initialize $T \gets \emptyset $
\STATE   $r\gets \displaystyle\min_{1\le i < j \le d } \, \Big( \| \hat{\zeta}_i- \hat{\zeta}_j\| - (\rho_i + \rho_i) \, \Big)$
\FOR{ \indent $\hat{\zeta}'\in S'$ }
\STATE{\textbf{repeat } $\hat{\zeta }' \gets N_{g'} (\hat{\zeta} ')$ \textbf{ until } $2\, \beta (g', \hat{\zeta} ')< r/3$}
\STATE{ $\rho ' \gets 2 \, \beta (g', \hat{\zeta} ')$}
\STATE{\textbf{for } $j=1, \, \ldots , d$ \textbf{ if } $B(\hat{\zeta_j}, \rho_j)\cap B '(\hat{\zeta }', \rho ') \ne \emptyset $ \textbf{ then } $T\gets T \cup \{ \hat{\zeta_j } \}$}
\ENDFOR
    \STATE{\textbf{if } $\big(\#T == e \, \big)$, then \textbf{return } $T$ \textbf{, else }\textbf{ return } FAIL}
  \end{algorithmic}
\end{algorithm}
Note that the intersection of balls in line 6 is non-empty if and only if
\[
    \rho ' + \rho_j\ >\ \norm{\hat{\zeta_j}- \hat{\zeta}'}\ ,
\]
so this condition may be decided in rational arithmetic if a hard certificate is
desired.

\begin{thm}
  \label{thm:set}
  Let $f$ be a system of polynomials having $e$ solutions whose general square subsystems have $d$ solutions.
  Then Algorithm~\ref{alg:set} either returns FAIL or it returns a set $T$ of approximate solutions to $f$ whose associated
  solutions are all the solutions to $f$.
\end{thm}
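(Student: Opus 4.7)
The plan is to verify two complementary claims: first, each solution of $f$ contributes a distinct element to $T$, so $\#T \ge e$ whenever the algorithm reaches the final check; second, if $\#T = e$, no spurious element has been added, so $T$ is exactly the set of $\hat{\zeta_j} \in S$ whose associated $g$-solution lies in $\mathcal{V}(f)$. Together these give both soundness and completeness.

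First I would verify that the \textbf{repeat}--\textbf{until} loop terminates. Since each $\hat{\zeta}' \in S'$ is an approximate solution to $g'$ with associated solution $\zeta'$, its Newton iterates converge quadratically to $\zeta'$, so $\beta(g',\hat{\zeta}')$ tends to zero and $\alpha(g',\hat{\zeta}')$ eventually falls below the threshold of Proposition~\ref{prop:alpha}. Past that point $\rho' := 2\beta(g',\hat{\zeta}')$ is a valid approximate-solution radius, meaning $\zeta' \in B(\hat{\zeta}',\rho')$, and after a few more iterations $\rho' < r/3$ as required.

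Next I would show that, for the refined $\hat{\zeta}'$, its ball $B(\hat{\zeta}',\rho')$ meets at most one $B(\hat{\zeta_j},\rho_j)$. If $B' \cap B_i \neq \emptyset$ and $B' \cap B_j \neq \emptyset$ for some $i \neq j$, two applications of the triangle inequality give $\|\hat{\zeta_i} - \hat{\zeta_j}\| < \rho_i + \rho_j + 2\rho'$, whereas by definition of $r$ we have $\|\hat{\zeta_i} - \hat{\zeta_j}\| \ge \rho_i + \rho_j + r$. Hence $\rho' > r/2$, contradicting $\rho' < r/3$. So each iteration of the outer \textbf{for} loop adds at most one element to $T$.

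For completeness, I would use that every $\zeta \in \mathcal{V}(f)$ lies in both $\mathcal{V}(g)$ and $\mathcal{V}(g')$. Thus $\zeta$ is the associated solution of a unique $\hat{\zeta_j} \in S$ (with $\zeta \in B(\hat{\zeta_j},\rho_j)$) and of a unique $\hat{\zeta}' \in S'$ (with $\zeta \in B(\hat{\zeta}',\rho')$ after refinement). Their balls intersect at $\zeta$, so $\hat{\zeta_j}$ is added when $\hat{\zeta}'$ is processed. Different elements of $\mathcal{V}(f)$ yield different $\hat{\zeta_j}$'s because distinct solutions to $g$ correspond to distinct elements of $S$. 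Hence $T$ contains at least $e$ elements whose associated $g$-solutions lie in $\mathcal{V}(f)$; these are genuine approximate solutions to $f$ with refinement operator $\mathcal{N}_g$. If $\#T > e$, some extraneous $\hat{\zeta_j}$ (with $\zeta_j \notin \mathcal{V}(f)$) must have been added via a ball intersection triggered by some $\hat{\zeta}' \in S'$ whose associated solution lies in $\mathcal{V}(g') \setminus \mathcal{V}(f)$, and the algorithm FAILs; if $\#T = e$, the above $e$ elements are the only ones, and the returned $T$ has the claimed property.

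The main obstacle will be the bookkeeping around the separation argument---confirming that the $r/3$ threshold is exactly what is needed to preclude a single $B(\hat{\zeta}',\rho')$ from meeting two of the $B(\hat{\zeta_j},\rho_j)$ (and fixing the apparent typo $\rho_i+\rho_i$ in the formula for $r$)---together with ensuring that $2\beta$ remains a valid approximate-solution radius when the loop exits, which rests on the quadratic-convergence control in Proposition~\ref{prop:alpha}.
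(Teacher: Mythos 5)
Your proof is correct and follows essentially the same route as the paper's: termination because $r>0$, the refined ball meets at most one ball of $B$, each solution to $f$ is picked up by a matching $\hat{\zeta}'\in S'$, and hence $\#T\ge e$, so equality forces $T$ to consist exactly of the $e$ true solutions. You additionally spell out the triangle-inequality calculation behind the ``at most one ball'' claim and correctly flag the $\rho_i+\rho_i$ typo in the definition of $r$ (which should read $\rho_i+\rho_j$), neither of which the paper makes explicit.
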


As with Algorithm~\ref{alg:ind}, while the hypotheses appear restrictive, they are natural from an intersection-theoretic
perspective, and are satisfied by a large class of systems of equations. We explain one such family coming from Schubert calculus in Section~\ref{subsec:schubertExample}.

\begin{proof}
  Since the balls $B(\hat{\zeta}_i,\rho_i)$ are pairwise disjoint, the quantity $r$ is positive.
  Thus the refinement of each approximate solution $\hat{\zeta}'$ on line 4 terminates.
  Having refined each $\hat{\zeta } ' \in S',$ note that $B(\hat{\zeta } ' , \rho ')$ can intersect at most one ball from
  $B$. 
  Now, if $\zeta_1, \ldots , \zeta_e$ are the solutions to $f$, then we must have that some $\hat{\zeta_{i_j}}$ is
  associated to each $\zeta_j$ for some indices $1 \le i_1 < i_2 < \cdots < i_e \le d$.
  Thus, if $T$ has $e$ elements, then the only solutions to $g$ associated to $T$ are also solutions to $f$.
\end{proof}

\begin{remark}
  If $g'$ is a general square subsystem of $f$, then it will have $d$ solutions and the only common solutions to $g$ and to
  $g'$ are solutions to $f$.
  In this case, if Algorithm~\ref{alg:set} returns FAIL, then $\#T>e$, so that some pair
  of balls in Step 6 meet, but their intersection does not contain a common solution to $g$ and to $g'$.
  In this case, we may then further refine the solutions in $S,S'$, and the corresponding balls until no such extraneous
  pair of balls meet.  
\end{remark}




\subsection{Newton-Okounkov bodies and Khovanskii bases}\label{sec:nobody}
Perhaps the main difficulty in applying Algorithm~\ref{alg:ind} is obtaining the correct number $d$ beforehand.
As noted in the beginning of this section, Gr\"{o}bner bases give a general recipe for calculating this number.
Here, we sketch a less well-developed approach in the case of a square subsystem defined as in~\eqref{eq:mat}.
In this case, $d$ is given by a \emph{birationally-invariant intersection}
index over $\CC^n.$
We summarize the basic tenets of this theory as developed in~\cite{KK10,KK12}.

\begin{defn}(\cite[Def.~4.5]{KK12})
  Let $X$ be an $n$-dimensional irreducible variety over $\CC$ with singular locus $X_{sing}$.
  For an $n$-tuple  $(L_1, L_2, \dotsc , L_n)$ of finite-dimensional complex subspaces of the function field $\CC (X)$, let
  $\bs{L} = L_1 \times L_2 \times \cdots \times L_n$, and define 
\[
   U_{\textbf{L}}\ :=\ \{ z\in X \setminus X_{sing} \mid  L_i \subset \calO_{X,z} \mbox{ for }i=1,\dotsc,n \}\,,
\]
the set of smooth points where every function in each subspace $L_i$ is regular, and  
\[
   Z_{\textbf{L}}\ :=\  \bigcup_{i=1}^n \{ z\in U_{\bs{L}} \mid f(z) =0 \ \ \forall  f\in L_i\, \}\,,
\]
the set of basepoints of $\bs{L}$. 
For generic $g=(g_1,\dotsc , g_n)\in \bs{L}$, all solutions to the system $g_1(z)=\cdots = g_n(z)=0$ on
$U_{\textbf{L}} \smallsetminus Z_{\textbf{L}}$ are nonsingular and their number is independent of the choice of $g$.
The common number is the birationally invariant intersection index $[L_1,L_2,\ldots , L_n]$. 
\end{defn}

These claims are proven in~\cite[Sections 4 \& 5]{KK10}.
For our purposes, $X=\CC^n$ and $\bs{L}=L\times \cdots \times L$ where $L\subset \CC [z_1,\ldots , z_n]$
is the linear space spanned by the polynomials in our system $f$.
Write $d_L$ for this self-intersection index, note that $U_{\bs{L}} = \CC^n$, while $Z_{\bs{L}}=\variety (f)$.
Thus~\eqref{eq:bound} holds for general square subsystems of $f$, taking $d=d_L$.\medskip 

Let  $\nu \colon \CC (X)^\times \to (\ZZ^n , \prec )$ be a surjective valuation where $\prec $ is some fixed total order on
$\ZZ^n$.
For example, $\nu$ could restrict to the exponent of the leading monomial in a term order $\prec$ on $\CC[x_1,\dotsc,x_n]$.
We attach to $(L,\nu )$ the following data:
\begin{itemize}
\setlength\itemsep{0.3em}
\item $A_L=\displaystyle\bigoplus_{k=0}^\infty t^k L^k$---a graded subalgebra of $\CC (X)[t].$ 
\item $S(A_L,\nu ) = \{ (\nu (f), k) \mid f \in L^k \text{ for some } k \in \NN \}$, a sub-monoid of $\ZZ^n \oplus \NN$
  associated to the pair $(L,\nu ),$ where $L^k$ is the $\CC$-span of $k$-fold products from $L$.
  This is the \demph{initial algebra} of $A_L$ with respect to the extended valuation $\nu_t \colon\CC (X)(t)^\times \to
  (\ZZ^n \oplus \ZZ ,\prec_t)$ defined by $\nu_t (f_k\,t^k +\cdots  + f_0 ) \mapsto \left(\nu (f_k) , k \right),$ where
  $\prec_t$ is the \emph{levelwise order} defined by 
\[
    (\alpha_1 , k_1)\ \prec_t\ (\alpha_2, k_2) \quad \text{ if }
    \quad  k_1\ >\ k_2 \quad \text{ or }
    \quad k_2\ =\ k_1 \  \text{and} \quad \alpha_1\ \prec\ \alpha_2\,.
\]
\item $\ind (A_L , \nu )$---the index of $\ZZ \, S(A_L,\nu ) \cap \left( \ZZ^n \times \{ 0\} \right)$ as a subgroup of $\ZZ^n \times \{ 0 \}$.
\item $\overline{\Cone (A_L,\nu )}$---the Euclidean closure of all $\RR_{\ge 0}$-linear combinations from $S(A_L, \nu ).$
\item $\Delta (A_L,\nu )= \overline{\Cone (A_L,\nu )}\cap (\RR^{n}\times\{1\})$---the \demph{Newton-Okounkov body.}
\end{itemize}\smallskip

The linear space $L$ induces a rational \demph{Kodaira map}
\[
  \Psi_L\ \colon\ X\ --\to\ \PP (L^*)\qquad
  z\ \mapsto\ \big[ f \mapsto f(z) \big]\,,
\]
with the section ring $A_L$ the projective coordinate ring of the image.

\begin{prop}[{{\cite[Thm.~4.9]{KK12}}}]
\label{prop:kk}
  Let $L$ be a finite-dimensional subspace of $\CC (X)$.
  Then
\[
  d_L\ =\ \displaystyle\frac{n! \, \deg \Psi_L}{\ind (A_L,\nu )} \cdot \Vol \, \Delta (A_L, \nu).
\]
Here, $\Vol$ denotes the $n$-dimensional Euclidean volume in the slice $\RR^n \times \{1 \}$.
\end{prop}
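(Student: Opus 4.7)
The plan is to reduce the identity to the standard Newton--Okounkov body formula for the degree of a projective variety together with a degree-of-map correction coming from the Kodaira map $\Psi_L$.

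First, I would recast the intersection index geometrically. A generic tuple $(g_1, \dotsc, g_n) \in \bs{L}$ corresponds, via the duality $L \leftrightarrow \{\text{hyperplanes in } \PP(L^*)\}$, to a generic configuration of $n$ hyperplanes $H_1, \dotsc, H_n \subset \PP(L^*)$. Because $\Psi_L$ is defined and locally regular on $U_{\bs L} \setminus Z_{\bs L}$, the common zeros counted in $d_L$ are precisely the preimages $\Psi_L^{-1}(H_1 \cap \cdots \cap H_n)$ lying in that open set. By Bertini plus genericity, the intersection $Y \cap H_1 \cap \cdots \cap H_n$ inside $Y := \overline{\Psi_L(X)}$ is transverse and consists of $\deg Y$ reduced points, each of which has $\deg \Psi_L$ preimages in $X$. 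This yields the factorization $d_L = \deg \Psi_L \cdot \deg Y$, so it remains to identify $\deg Y$ with $n! \, \Vol\,\Delta(A_L,\nu) / \ind(A_L,\nu)$.

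Next, I would exploit that $A_L = \bigoplus_k t^k L^k$ is, up to integral closure, the projective coordinate ring of $Y$ under the embedding by $L$, so $\deg Y$ is read off from the leading coefficient of its Hilbert function $H(k) = \dim_\CC L^k$. The valuation $\nu$ induces an injection from any basis of $L^k$ into the semigroup slice $S(A_L,\nu) \cap (\ZZ^n \times \{k\})$, since elements of $L^k$ with distinct $\nu$-values are $\CC$-linearly independent. Thus $H(k)$ equals the number of lattice points of $S(A_L,\nu)$ at level $k$. A standard Ehrhart-type argument says this grows like the number of integer points of the dilate $k \cdot \Delta(A_L,\nu)$ relative to the sublattice $\ZZ \, S(A_L,\nu) \cap (\ZZ^n \times \{0\})$, which in turn is asymptotic to $k^n \Vol \Delta(A_L,\nu) / \ind(A_L,\nu)$. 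Matching this with the leading term $(\deg Y) \, k^n / n!$ of the Hilbert polynomial gives the desired formula.

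The main obstacle is the lattice-point asymptotic: the semigroup $S(A_L,\nu)$ is generally not finitely generated, so Ehrhart's theorem does not apply directly. One must instead show that $S(A_L,\nu)$ sits between two finitely generated semigroups whose level-$k$ counts have the same leading volume asymptotics, using convexity of $\overline{\Cone(A_L,\nu)}$ and Khovanskii's approximation lemma for strongly nonnegative semigroups. This is the technical core of \cite{KK12} (Sections 4--5), and I would invoke it as a black box rather than redoing the proof. The remaining step, interpreting $\ind(A_L,\nu)$ correctly as the covolume factor rescaling Euclidean to lattice volume, is then a routine compatibility check between $\nu_t$ and the levelwise order $\prec_t$.
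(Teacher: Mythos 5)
The paper does not prove this proposition; it cites it directly as \cite[Thm.~4.9]{KK12} with no argument given, so there is no internal proof to compare against. That said, your sketch is a faithful reconstruction of the Kaveh--Khovanskii argument: factor $d_L = \deg\Psi_L \cdot \deg Y$ by pulling generic hyperplane sections of $Y=\overline{\Psi_L(X)}$ back along the Kodaira map; identify $\deg Y$ with $n!$ times the leading coefficient of the Hilbert function $k\mapsto\dim L^k$; convert that dimension into a lattice-point count in the level-$k$ slice of $S(A_L,\nu)$ using that elements of distinct valuation are linearly independent; and finally invoke Khovanskii's semigroup approximation lemma to get the volume asymptotic $\Vol\,\Delta/\ind$ even when $S(A_L,\nu)$ is not finitely generated. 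You correctly isolate that last step as the technical crux and treat it as a black box, which matches the role the paper assigns to \cite{KK10,KK12}. Two small imprecisions, neither fatal: $A_L$ \emph{is} the homogeneous coordinate ring of $Y\subset\PP(L^*)$, not merely up to integral closure (though the Hilbert polynomials would agree anyway); and the linear-independence claim should be stated as ``the number of distinct $\nu$-values on $L^k\smallsetminus\{0\}$ equals $\dim L^k$'' rather than as an injection from an arbitrary basis, since one needs to triangularize to a $\nu$-adapted basis first.
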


In our setting, where $X=\CC^n$ and $L= \spa_\CC \{ f_1,\ldots , f_N \},$ the Kodaira map $\Psi_L $ is 
$z \mapsto [f_1(z) : f_2(z) : \cdots : f_N (z)]$.
Thus, if need be, $\deg \Psi_L$ may be computed symbolically.
The main difficulty in applying Proposition~\ref{prop:kk} is that it may be hard to determine the 
Newton-Okounkov body, as the monoid $S(A_L,\nu )$ need not be finitely generated.
This leads us to the notion of a finite Khovanskii basis~\cite{KM16}.

\begin{defn}
  \label{def:khovanskii}
  A \emph{Khovanskii basis} for $(L,\nu)$ is a set $\{a_i\mid i\in I\}$ of generators for the algebra $A_L$ whose values
  $\{\nu_t (a_i)\mid i\in I\}$ generate the monoid $S(A_L, \nu )$.
  If $<$ is a global monomial order on $k[z_1,\ldots , z_n],$ taking lead monomials defines a valuation
  $\nu\colon k[z_1,\ldots , z_n] \to (\ZZ^n,\prec)$, where $\prec $ is the reverse of $<$.
  A Khovanskii basis with respect this valuation is commonly known as a \emph{SAGBI basis}~\cite{KM89,RS90}. 
\end{defn}

When the monoid  $S(A_L,\nu )$ is finitely generated, there is a finite Khovanskii basis for $(L,\nu)$. When this occurs, we may compute the Khovanskii basis via a binomial-lifting/subduction algorithm such as described
in~\cite{RS90} or~\cite[Ch.~11]{Stu96}.

\begin{example}
We consider an ``illustrative example'' of an overdetermined system from~\cite{AHS18}: $$
\left(
\begin{array}{c}
f_1(z_1,z_2,z_3)\\
f_2(z_1,z_2,z_3)\\
f_3(z_1,z_2,z_3)\\
f_4(z_1,z_2,z_3)
\end{array}
\right)
= \left(
\begin{array}{c}
{z}_{1}^{2}+{z}_{2}^{2}-1,\,\\
-{16}\,{z}_{2}^{2}+{8}\,{z}_{1}+{17
      },\,\\
-{z}_{2}^{2}+{z}_{1}-{z}_{3}-1,\,\\
{64}\,{z}_{1}{z}_{2}+{16}\,{z}_{2}
      \end{array}
            \right)
      $$
The square subsystem defiened by $f_1=f_2=f_3=0$ has two singular solutions, and $f_4$ is the Jacobian determinant of this subsystem. Let $<$ be the graded reverse lexicographic ordering with $z_1>z_2>z_3$ and $L= \spa_\CC \{ f_1, f_2, f_3, f_4 \}.$ We observe that the initial terms of $tf_1, \ldots, tf_4 \in A_L$ under the induced order $<_t$ are given by $t\,{z}_{1}^{2},\,{-{16}\,t\,{z}_{2}^{2}},\,{-t\,{z}_{2}^{2}},$ and $64 t\,{z}_{1}{z}_{2}.$ The lattice points corresponding to these monomials comprise the first level $S(A_L, \nu ) \cap (x_4 =1)$ and lie in the linear subspace of $\RR^3 \times \{1 \} $ defined by  $x_3=0.$ We see that the inner approximation to the Newton-Okounkov body $\Delta (A_L, \nu )$ given by the first level has $3$-dimensional volume $0.$ However, there exists $x\in S( A_L, \nu )$ with $x_3\ne 0$:
      \begin{eqnarray*}
512\,t^{2}{z}_{1}^{2}{z}_{3}+6656\,t^{2}{z}_{1}{z}_{3}-6400\,t^{2}{z}_{3
      }^{2}+14000\,t^{2}{z}_{1}-26368\,t^{2}{z}_{3}-27125\,t^{2} \\
      = t^2 \, (64\,{f}_{1}{f}_{2}-21\,{f}_{2}^{2}-512\,{f}_{1}{f}_{3}+768\,{f}_{2}{f}_{
      3}-6400\,{f}_{3}^{2}+\tfrac{1}{8}\,{f}_{4}^{2}) \in A_L,
\end{eqnarray*}
giving
$$x:=\left(\begin{array}{c} 2\\ 0\\ 1 \\ 2 \end{array} \right) \in S(A_L).$$ This element of $A_L$ was obtained by the previously mentioned binomial-lifting/subduction algorithm---carrying this out further, we can verify that this new element together with the original generators give a finite Khovanskii basis for $A_L.$ It follows that
$$\Delta (A_L, <_t) = \conv
\Bigg(
\left( \begin{array}{c} 1 \\ 0 \\ 0 \\1\end{array} \right),
\left( \begin{array}{c} 0 \\ 2 \\ 0 \\1\end{array} \right),
\left( \begin{array}{c} 1 \\ 1 \\ 0 \\1\end{array} \right),
\left( \begin{array}{c} 2 \\ 0 \\ 0 \\1\end{array} \right),
\left( \begin{array}{c} 1 \\ 0 \\ 1/2 \\1\end{array} \right)
\Bigg)
$$
and $\Vol (A_L) = 1.$ We also have that $\deg \Psi_L = 2$ and $\ind (A_L)=1.$ Thus, we have $d_L=2,$ giving a total root count of $4$ after squaring up $f.$
\end{example}

\begin{remark}
We note that the total root count for a system $g=(g_1,g_2,g_3)$ obtained by randomizing $f$ in the previous example is
equal to the normalized volume of the common Newton polytope of $g_1,g_2,g_3$, which is four. 
This is equal to the ``expected'' polyhedral root count~\cite{Kus76, Ber75}.
However, this information does not give us $d_L=2$ as needed for Algorithms~\ref{alg:ind} and~\ref{alg:set}.
\end{remark}

\section{Certification via liaison pruning}
\label{sec:liaison}
Suppose that we have an overdetermined system $f$ with a square subsystem $g$, so that $\variety(f)\subset\variety(g)$.
Suppose further that we have a square system $h$ with $\variety(h)=\variety(g)\smallsetminus\variety(f)$.
Given this, we may certify all approximate solutions to $g$ and then certify the subset of those
that are approximate solutions to $h$, so that the solutions in $\variety(g)\smallsetminus\variety(h)$ which remain
are certifiably approximate solutions to $f$.
This solves Problem 1.
When this occurs, we say that $\variety(f)$ is in liaison with the complete intersection $\variety(h)$. The basic scheme for certification via liaison is Algorithm~\ref{alg:liaisonI}. We also give a generalized version, Algorithm~\ref{alg:liaisonII}, which we later apply to the Schubert calculus in Section~\ref{subsec:schubertExample}.

Let us begin with some definitions.
A system $g_1,\dotsc,g_r$ of $r$ polynomials is a \demph{complete intersection} if the variety
$\variety(g_1,\dotsc,g_r)\subset\CC^n$ they define has dimension $n-r$, equivalently if it has \demph{codimension} $r$. 
A square system is a zero-dimensional complete intersection.

More generally, varieties $X,Y\subset\CC^n$ of codimension $r$ are in \demph{liaison} if there are polynomials
$g_1,\dotsc,g_r\in\CC[x_1,\dotsc,x_n]$ such that $\variety(g_1,\dotsc,g_r)=X\cup Y$.
This relation has been deeply studied (see~\cite{KMMNP} and the references therein).  
Of particular interest is when one of the varieties, say $Y$, is itself a (different) complete intersection, so that $X$ is
in liaison with a complete intersection.
(This is a special case of the {\it licci} equivalence relation.)

\begin{example}\label{Ex:RatNormCurve}
 {\bf (The twisted cubic.)}
 The closure of the set $\{[1,t,t^2,t^3]\mid t\in\CC\}$ is the rational normal curve $C\subset\PP^3$.
 It is defined by three quadrics, $wy-x^2, wz-xy, xz-y^2$, and is thus not a complete intersection.
 In the affine patch $\CC^3$ defined by $w=1$, if we use the difference of the first two generators
 and the last generator, 
 then $\variety(z-y+x^2-xy,xz-y^2)=C\cup\ell$, where
 $\ell=\variety(x-y,x-z)$ is the line $\{(t,t,t)\mid t\in\CC\}$. 
 \[
   \begin{picture}(100,100)
      \put(0,0){\includegraphics{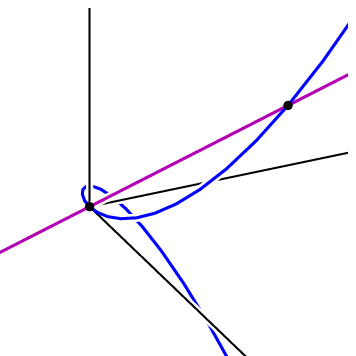}}
      \put(69,5){$x$} \put(93,50){$y$} \put(17.5,93){$z$}
      \put(3,35){$\ell$}  \put(47,32){$C$}
    \end{picture}
 \]
\end{example}

Let $X\subset\CC^n$ be a variety of codimension $r$ that is in liaison with a complete intersection $Y$.
There are  polynomials $f=(f_1,\dotsc,f_s)$, $g=(g_1,\dotsc,g_r)$, and $h=(h_1,\dotsc,h_r)$ such that
\[
   X\ =\ \variety(f)\,,\qquad
   X\cup Y\ =\ \variety(g)\,,\quad\mbox{ and }\quad
   Y\ =\ \variety(h)\,.
\]
We generalize the notion of a square system of polynomials. A \demph{square system} on $X$ consists of polynomials $g_{r+1},\dotsc,g_n$ that are sufficiently general in that
$X\cap\variety(g_{r+1},\dotsc,g_n)$ is a finite set and the intersection is transverse.
Then
 \begin{equation}\label{Eq:liaison_reveal}
   \variety(g_1,\dotsc,g_n)\ =\
    \bigl(X\cap\variety(g_{r+1},\dotsc,g_n)\bigr)\:\cup\:
    \bigl(Y\cap\variety(g_{r+1},\dotsc,g_n)\bigr)\:.
 \end{equation}
Thus the square system $X\cap\variety(g_{r+1},\dotsc,g_n)$ on $X$ is the set-theoretic difference of two square systems of
polynomials, $\variety(g_1,\dotsc,g_n)$ and
 \begin{equation}\label{Eq:liaison_square}
  \variety(h_1,\dotsc,h_r\,,\,g_{r+1},\dotsc,g_n)\ =\ Y\cap\variety(g_{r+1},\dotsc,g_n)\,.
 \end{equation}
For example, let $C$ be the rational normal curve of Example~\ref{Ex:RatNormCurve} in $\CC^3$, which has codimension 2, so
that $C\cap \variety(x+y+z+1)$ is a square system on $C$.
Manipulating the polynomials in $\variety(z-y+x^2-xy,xz-y^2,x+y+z+1)$ leads to the solutions
\[
  (-\tfrac{1}{3},-\tfrac{1}{3},-\tfrac{1}{3})\ \mbox{ on $\ell$ \ and \ }
  (-1,1,-1)\mbox{ and }(\pm\sqrt{-1},-1,\mp\sqrt{-1})\ \mbox{ on $C$\,.}
\]
If $f_1,\dotsc,f_s$ generate the ideal of $X$, then $X\cap\variety(g_{r+1},\dotsc,g_n)$ is the overdetermined system
\[
  f \ =\ (f_1,\dotsc,f_s,g_{r+1},\dotsc,g_n)
\]
Thus an algorithm to certify points on $X\cap\variety(g_{r+1},\dotsc,g_n)$ solves Problem 1 for $f$.
As we may certifty solutions and nonsolutions to systems~\eqref{Eq:liaison_reveal} and~\eqref{Eq:liaison_square}, this
discussion leads to the following certification algorithm,
when a variety $X$ is in liaison with a complete intersection $Y$.
This uses the test of Proposition~\ref{prop:reject}, the Taylor
residual~\eqref{eq:taylor_bound}, and Smale's $\alpha$-theory for the system~\eqref{Eq:liaison_square}.

\begin{algorithm}[Certifying approximate solutions to a square system on a variety $X$]\label{alg:liaisonI}$ $\\
  \textbf{Input:} $(r, g, h, S )$
  \begin{itemize}
  \item[] $r \in \NN$
  \item[] $g=(g_1,\dotsc,g_n)$ --- a square polynomial system such that $\variety(g_1,\dotsc,g_r)=X\cup Y$,
      \mbox{\qquad}with both $X$ and $Y$ of codimension $r$
  \item[] $h=(h_1,\dotsc,h_r)$ --- polynomials such that $\variety(h)=Y$
  \item[] $S = \{ \hat{\zeta_1 }, \ldots , \hat{\zeta_m} \}$ --- pairwise distinct
    approximate solutions to $g$  with refinement\newline \mbox{\qquad} operator $\calN_g$
  \end{itemize}
  \textbf{Output:} $T,U\subset S$ with $S=T\sqcup U$, where $T$ consists of approximate solutions to
  $X\cap\variety(g_{r+1},\dotsc,g_n)$ and $U$ consists of approximate solutions to
  $Y\cap\variety(g_{r+1},\dotsc,g_n)$.\vspace{-10pt}
  \begin{algorithmic}[1]
    \STATE  Set $\defcolor{f}:=(h_1,\dotsc,h_r\,,\,g_{r+1},\dotsc,g_n)$, a square system on $Y$.
    \STATE Initialize $T \gets \emptyset $, $U \gets \emptyset $
    \FOR{ \indent $\hat{\zeta}\in S$ }
     \STATE{$\zeta' \gets \hat{\zeta}$}
     \STATE{\textbf{ if } $\alpha(f, \zeta') < \frac{13-3\sqrt{17}}{4}$
            \textbf{ then } $U\gets U \cup \{ \hat{\zeta} \}$}
     \STATE{\textbf{ \  else if } $\delta(f,g, \zeta') > 0 $
            \textbf{ then } $T\gets T \cup \{ \hat{\zeta} \}$}
     \STATE{\textbf{ \ else } $\zeta' \gets \calN_g(\zeta')$ and return to 5.}
     \STATE{\textbf{ end if }}
       \ENDFOR
  \end{algorithmic}
\end{algorithm}

\begin{remark}
\label{remark:eff+alpha}
As in all subsequent algorithms, we assume distinct approximate solutions to $g$ with
refinement operator $\calN_g$ as part of the input.
We could have just as easily assumed effective approximate solutions.
The test in line 5 could be replaced by testing that $\zeta '$ is an approximate solution to the square system $f$ by some criterion other than $\alpha$-theory---for simplicity, we do not assume this criterion is part of the input.
\end{remark}

\begin{proof}[Proof of correctness]
 As $\hat{\zeta}\in S$, it is an approximate solution to the square system $g$ with an associated nonsingular solution
 $\zeta\in\variety(g)\subset X\cup Y$.
 Since $\zeta$ is nonsingular, $\zeta\not\in X\cap Y$, as $X\cup Y$ is singular along $X\cap Y$.
 Thus $\zeta\in X$ if and only if $\zeta\not\in Y$.
 Let $\{\hat{\zeta}_i\mid i\in\NN\}$  be the sequence of iterates using $\calN_g$ starting at $\hat{\zeta}$.
 This converges to $\zeta$.
  
 If $\zeta\in Y$, then $\zeta\in\variety(f)$, and the sequence $\{\hat{\zeta}_i\}$  will eventually lie in the basin of
 quadratic convergence for Newton iterations $N_f$ and $\beta(f, \hat{\zeta}_i)$ converges to $0$.
 As $\gamma(f, \hat{\zeta}_i)$ is bounded, $\alpha(f, \hat{\zeta}_i)=\gamma(f, \hat{\zeta}_i)\cdot\beta(f,\hat{\zeta}_i)$
 converges to $0$.
 Thus the condition in line 5 will eventually hold and $\hat{\zeta}$ will be placed in $U$.

 If $\zeta\not\in Y$, then $\zeta\not\in\variety(f)$.
 By Corollary~\ref{cor:refine_and_reject}, the Taylor residuals $\delta(f,g,\zeta_j)$ are positive for $j$ large enough.
 Thus the condition in line 6 eventually holds, and $\hat{\zeta}$ will be placed into $T$.
\end{proof}

We describe a more involved application of this idea.
Write  $\defcolor{\codim X}$ for the codimension, $n{-}\dim X$, of a variety $X\subset\CC^n$.
Suppose that $X_1,\dotsc,X_m\subset\CC^n$ are in general position and
$\sum \codim X_i=n$, then Bertini's Theorem~\cite{Kl74} implies that 
 \begin{equation}\label{Eq:intersection}
   X_1\bigcap X_2 \bigcap \dotsb \bigcap X_m
 \end{equation}
is a transverse intersection consisting of finitely many points.
When $n=m$, so that each $X_i= \variety (f_i) $ is a hypersurface, then~\eqref{Eq:intersection} is equivalent to the square
polynomial system
\[
  f_1\ =\ f_2\ =\ \dotsb\ =\ f_n\ =\ 0\,.
\]
As a variety need not be a complete intersection, a \demph{square system of varieties}~\eqref{Eq:intersection}  with $m<n$
does not necessarily have a formulation as a square system of polynomials. However, the points of ~\eqref{Eq:intersection} are the solutions to an overdetermined system of polynomials given by
  the generators of the ideals of each of $X_1,\dotsc,X_m$.

Suppose now that $X_1,\dotsc,X_m\subset\CC^n$ form a square system of varieties~\eqref{Eq:intersection}, each 
$X_i$ is in liaison with a complete intersection $Y_i$, and these are all in sufficiently general position.
Then there are square systems $g_1,\dotsc,g_n$ and $h_1,\dotsc,h_n$ of polynomials such that if
$\defcolor{a_\bullet}\colon 0=a_0<a_1<\dotsb<a_m=n$ is defined by $a_i-a_{i-1}=\codim X_i (=\codim Y_i)$ for each $i$, then
 \begin{equation}\label{Eq:lcli}
  \variety(g_{1+a_{i-1}},\dotsc,g_{a_i})\ =\ X_i\,\cup\, Y_i
    \qquad\mbox{and}\qquad
    \variety(h_{1+a_{i-1}},\dotsc,h_{a_i})\ =\  Y_i\,,
  \end{equation}
are complete intersections for each $i=1,\dotsc,m$.
Thus
 \begin{equation}\label{Eq:ManyExcess}
   \variety(g)\ =\ \bigcap_{i=1}^m \bigl( X_i \cup Y_i\bigr)\ .
 \end{equation}
We give a more general version of Algorithm~\ref{alg:liaisonI} that will certify solutions to the square
system~\eqref{Eq:intersection} of varieties, given solutions~\eqref{Eq:ManyExcess} to the square system $g$.

\begin{algorithm}[Certifying solutions to a square system of varieties]\label{alg:liaisonII}$ $\\
  \textbf{Input:} $(a_\bullet, g, h, S )$
  \begin{itemize}
  \item[] $a_\bullet\colon 0=a_0<a_1<\dotsb<a_m=n$
  \item[] $g=(g_1,\dotsc,g_n)$ and $h=(h_1,\dotsc,h_n)$ --- square polynomial systems such that\newline
        \mbox{\qquad} for each $i=1,\dotsc,m$,~\eqref{Eq:lcli} are complete intersections.
  \item[] $S = \{ \hat{\zeta_1 }, \ldots , \hat{\zeta_s} \}$ --- pairwise distinct approximate solutions to $g$ 
  \end{itemize}
  \textbf{Output:} $T \subset S$ consisting of approximate solutions to $X_1\cap X_2\cap\dotsb\cap X_m$.
  \begin{algorithmic}[1]
  \FOR{ \indent $i=1,\dotsc, m$ }
    \STATE  Set $f:=(g_1,\dotsc,g_{a_{i-1}}\,,\,h_{1+a_{i-1}},\dotsc,h_{a_i}\,,\,g_{1+a_i},\dotsc,g_n)$.
    \STATE Initialize $T \gets \emptyset $
    \FOR{ \indent $\hat{\zeta}\in S$ }
     \STATE{$\zeta' \gets \hat{\zeta}$}
     \STATE{\textbf{ if } $\alpha(f, \zeta') < \frac{13-3\sqrt{17}}{4}$
            \textbf{ then } discard $\hat{\zeta}$}
     \STATE{\textbf{ \  else if } $\delta(f,g, \zeta') > 0 $
            \textbf{ then } $T\gets T \cup \{ \hat{\zeta} \}$}
     \STATE{\textbf{ \ else } $\zeta' \gets \mathcal{N}_g(\zeta')$ and return to 6.}
     \STATE{\textbf{ end if }}
     \ENDFOR
     \STATE $S\gets T$
  \ENDFOR
  \end{algorithmic}
\end{algorithm}

\begin{proof}[Proof of correctness]
  By algorithm~\ref{alg:liaisonI}, in each iteration $i=1,\dotsc,m$ of the outer loop, the algorithm constructs the 
  set $T$ of elements of the input $S$ that do not lie in $Y_1\cup\dotsb\cup Y_i$.
  As $S\cap X_1\cap\dotsb\cap X_m=S\smallsetminus(Y_1\cup\dotsb\cup Y_m)$, we see that the algorithm performs as claimed.
\end{proof}


\section{Examples}
\label{sec:examples}

\label{sec:quartics}We give three further examples that illustrate our certification via square subsystems.
All computations were carried out using the computer algebra system Macaulay2~\cite{M2}.
For each example, we found complex floating-point solutions to square subsystems via
homotopy continuation, as implemented in the package
\texttt{NumericalAlgebraicGeometry}~\cite{NAG4M2}.
Tests from $\alpha$-theory were supplied by the
package~\texttt{NumericalCertification}~\cite{NSLee}.

\subsection{Plane quartics through four points}
\label{subsec:quarticsExample}
Consider the overdetermined system $f=(f_1,\dotsc,f_{11}),$ where the the $f_i$ are given as follows:
\begin{eqnarray*}
  &z_1z_2-z_2^2+z_1-z_2\,,\ z_1^2-z_2^2+4z_1-4z_2\,,\ z_2^3-6z_2^2+5z_2+12\,,&\\ 
  & z_1z_2^2-6z_2^2-z_1+6z_2+12\,,\ z_1^2z_2-6z_2^2-4z_1+9z_2+12\,,\ z_1^3-6z_2^2-13z_1+18z_2+12\,,&\\ 
  &z_2^4-31z_2^2+42z_2+72\,,\ z_1z_2^3-31z_2^2+z_1+41z_2+72\,,\ z_1^2z_2^2-31z_2^2+4z_1+38z_2+72\,,&\\ 
  & z_1^3z_2-31z_2^2+13z_1+29z_2+72\,,\ z_1^4-31z_2^2+40z_1+2z_2+72\,.&
\end{eqnarray*}
These give a basis for the space of quartics passing through the four points:
\[
(4,4)\,,\ (-3,-1)\,,\ (-1,-1)\,,\ (3,3)\ \in\  \CC^2.
\]
As an illustration of Algorithm~\ref{alg:ind} and the techniques based on Khovanskii bases described in Section~\ref{sec:nobody}, we show how to certify that numerical approximations
of these points represent true solutions to $f$.

Letting $L=\spa_{\CC} \{ f_1,\ldots , f_{11} \},$ we consider the algebra $A_L.$ Letting $<$ be the graded-reverse lex
order with $z_1 > z_2,$ the algebra $A_L$ has a finite Khovanskii basis with respect to the $\ZZ^2$-valuation associated to $<$.
It is given by $S=\{ t \, f_1, t \, f_2 , \ldots , t \, f_{11},  t^2\,g, t^3\, h \},$ where
\begin{align*}
     g &= z_1 \, z_2^3-z_2^4+10 \, z_1^2 \, z_2-26 \, z_1 \, z_2^2 +16 \, z_2^3+10 \, z_1^2-15 \, z_1 \, z_2
           +5 \, z_2^2+12 \, z_1 -12 \, z_2 
      \\
    h &= 10 \, z_1^4 \, z_2-49 \, z_1^3 \, z_2^2+89 \, z_1^2 \, z_2^3-71 \, z_1 \, z_2^4+21 \, z_2^5
          +10 \, z_1^4-18 \, z_1^3 \, z_2-18 \,  z_1^2 \, z_2^2\\
     \, &+ 50 \, z_1 \, z_2^3-24 \, z_2^4+31 \, z_1^3-83 \, z_1^2 \, z_2
        +73 \, z_1 \, z_2^2-21 \, z_2^3+24 \, z_1^2-48 \,  z_1 \, z_2+24 \, z_2^2.
\end{align*}

The Newton-Okounkov body, depicted below, has normalized volume $12$. The integer points correspond to $f_1,\dotsc,f_{11}$. The fractional vertices corresponding to $t^2 g$ and $t^3 h$ demonstrate that these elements are essential in forming the Khovanskii basis.

\begin{center}
\includegraphics{./paper_bottom/pictures/NOB}
\end{center}

Using the procedure of~\cite{SS88}, we may express $g$ and $h$ as homogeneous polynomials in the algebra generators $f_1, \ldots, f_{11}$:

\begin{flalign*}
g &= -\displaystyle\tfrac{5452243}{3803436} {f}_{4}
       {f}_{9}+\displaystyle\tfrac{1088119}{7606872} {f}_{5}
       {f}_{9}-\displaystyle\tfrac{179087}{7606872} {f}_{6}
       {f}_{9}-\displaystyle\tfrac{1184975}{7606872} {f}_{8}
       {f}_{9}+\displaystyle\tfrac{2728589}{7606872}
       {f}_{9}^{2}-\displaystyle\tfrac{5046}{3913} {f}_{1}
       {f}_{10}\\
       &+\displaystyle\tfrac{5951}{11739} {f}_{2}
       {f}_{10}+\displaystyle\tfrac{5452243}{3803436} {f}_{3}
       {f}_{10}+\displaystyle\tfrac{2196073}{1901718} {f}_{4}
       {f}_{10}+\displaystyle\tfrac{905753}{3803436} {f}_{5}
       {f}_{10}-\displaystyle\tfrac{129295}{1901718} {f}_{6}
       {f}_{10}+\displaystyle\tfrac{1184975}{7606872} {f}_{7}{f}_{10}\\
       &-\displaystyle\tfrac{5983}{29484} {f}_{8}
       {f}_{10}-\displaystyle\tfrac{2728589}{7606872} {f}_{9}
       {f}_{10}-\displaystyle\tfrac{65165}{1267812}
       {f}_{10}^{2}-\displaystyle\tfrac{5951}{11739} {f}_{1}
       {f}_{11}-\displaystyle\tfrac{9872411}{7606872} {f}_{3}
       {f}_{11}-\displaystyle\tfrac{1632419}{7606872} {f}_{4} {f}_{11}\\
       &+\displaystyle\tfrac{129295}{1901718} {f}_{5}
       {f}_{11}-\displaystyle\tfrac{1184975}{7606872} {f}_{7}
       {f}_{11}+\displaystyle\tfrac{2728589}{7606872} {f}_{8}
       {f}_{11}+\displaystyle\tfrac{65165}{1267812} {f}_{9} {f}_{11}.
  \end{flalign*} 
\small 
\begin{flalign*}
        h &= \displaystyle\tfrac{423458528993}{35955045627228} {f}_{5}
       {f}_{9}^{2}
       -\displaystyle\tfrac{348294358499}{77902598858994} {f}_{6}
       {f}_{9}^{2}
       +\displaystyle\tfrac{33023933703287}{1012733785166922} {f}_{7}       {f}_{9}^{2}
-\displaystyle\tfrac{82250093861471}{6076402711001532} {f}_{8}
       {f}_{9}^{2}\\
       &-\displaystyle\tfrac{4432317106115}{233707796576982}
       {f}_{9}^{3}
       -\displaystyle\tfrac{33023933703287}{1012733785166922} {f}_{7}       {f}_{8} {f}_{10}
       +\displaystyle\tfrac{1065288183977}{37508658709886} {f}_{3} {f}_{9} {f}_{10}\\
&-\displaystyle\tfrac{96715490949542}{
       317951304645429} {f}_{4} {f}_{9}
       {f}_{10}
       -\displaystyle\tfrac{49052367589004489}{1695316356369427428}
       {f}_{5} {f}_{9} {f}_{10}\displaystyle\tfrac{25940308080550879}{
       1695316356369427428} {f}_{6} {f}_{9}
       {f}_{10}\\
       &-\displaystyle\tfrac{8914885258327}{467415593153964} {f}_{7}
       {f}_{9} {f}_{10}
+\displaystyle\tfrac{848951864573779}{13671906099753447}
       {f}_{8} {f}_{9} {f}_{10}\displaystyle\tfrac{47174423433062585}{
       1695316356369427428} {f}_{9}^{2}
       {f}_{10}\\
       &-\displaystyle\tfrac{13418439080090}{56262988064829} {f}_{1}
       {f}_{10}^{2}
       +\displaystyle\tfrac{1014252370876}{12983766476499} {f}_{2}
       {f}_{10}^{2}+\displaystyle\tfrac{490676889497623}{2103370169192838}
       {f}_{3} {f}_{10}^{2}\\
       &+\displaystyle\tfrac{135026148156913879}{
       423829089092356857} {f}_{4}
       {f}_{10}^{2}
       +\displaystyle\tfrac{1125534856927697}{27343812199506894}
       {f}_{5} {f}_{10}^{2}\displaystyle\tfrac{7907079377499775}{
       423829089092356857} {f}_{6}
       {f}_{10}^{2}\\
       &+\displaystyle\tfrac{164898009266531}{54687624399013788}
       {f}_{7} {f}_{10}^{2}-\displaystyle\tfrac{5432718489778696}{
       141276363030785619} {f}_{8}
       {f}_{10}^{2}-\displaystyle\tfrac{16931183230705166}{423829089092356857}
       {f}_{9} {f}_{10}^{2}\\
        &-\displaystyle\tfrac{2073065531395802}{
       141276363030785619} {f}_{10}^{3}-\displaystyle\tfrac{1065288183977}{
       37508658709886} {f}_{3} {f}_{8}
       {f}_{11}\displaystyle\tfrac{33023933703287}{1012733785166922} {f}_{7}
       {f}_{8} {f}_{11}\\
       &+\displaystyle\tfrac{13306110674011}{225051952259316}
       {f}_{3} {f}_{9} {f}_{11}+\displaystyle\tfrac{2828825493124010}{
       13671906099753447} {f}_{4} {f}_{9}
       {f}_{11}-\displaystyle\tfrac{4516368725120116}{423829089092356857}
       {f}_{5} {f}_{9} {f}_{11}\\
       &-\displaystyle\tfrac{3780890220862891}{
       1695316356369427428} {f}_{6} {f}_{9}
       {f}_{11}-\displaystyle\tfrac{280393696081193}{6076402711001532} {f}_{7}
       {f}_{9} {f}_{11}
       -\displaystyle\tfrac{313094593927918}{13671906099753447}
       {f}_{8} {f}_{9} {f}_{11}\\
       &+\displaystyle\tfrac{5042023611256019}{
       847658178184713714} {f}_{9}^{2}
       {f}_{11}
       +\displaystyle\tfrac{14158876247624}{168788964194487} {f}_{1}
       {f}_{10} {f}_{11}-\displaystyle\tfrac{8754568627342}{168788964194487}
       {f}_{2} {f}_{10} {f}_{11}\\
       &-\displaystyle\tfrac{26911193688915259}{
       54687624399013788} {f}_{3} {f}_{10}
       {f}_{11}-\displaystyle\tfrac{4282639294736275}{18229208133004596} {f}_{4}
       {f}_{10} {f}_{11}-\displaystyle\tfrac{31468039434977}{4484963905739226}
       {f}_{5} {f}_{10} {f}_{11}\\
       &+\displaystyle\tfrac{4914325106636902}{
       423829089092356857} {f}_{6} {f}_{10}
       {f}_{11}+\displaystyle\tfrac{1833610583591729}{54687624399013788} {f}_{7}
       {f}_{10} {f}_{11}
       +\displaystyle\tfrac{16078096764903062}{
       423829089092356857} {f}_{8} {f}_{10}
       {f}_{11}\\
       &+\displaystyle\tfrac{8296532868679901}{242188050909918204}
       {f}_{9} {f}_{10} {f}_{11}+\displaystyle\tfrac{7171418221493375}{
       565105452123142476} {f}_{10}^{2}
       {f}_{11}+\displaystyle\tfrac{8754568627342}{168788964194487} {f}_{1}
       {f}_{11}^{2}\\
       &+\displaystyle\tfrac{5171410646788483}{27343812199506894}
       {f}_{3} {f}_{11}^{2}+\displaystyle\tfrac{47304126637283297}{
       1695316356369427428} {f}_{4}
       {f}_{11}^{2}
       -\displaystyle\tfrac{4914325106636902}{423829089092356857}
       {f}_{5} {f}_{11}^{2}\\
       &-\displaystyle\tfrac{107608086440381}{
       27343812199506894} {f}_{7}
       {f}_{11}^{2}-\displaystyle\tfrac{33198943704009683}{1695316356369427428}
       {f}_{8} {f}_{11}^{2}-\displaystyle\tfrac{7171418221493375}{
         565105452123142476} {f}_{9} {f}_{11}^{2}.
\end{flalign*}
\normalsize
 
The Khovanskii basis was computed using the Macaulay2 package
\texttt{SubalgebraBases}, based on the work in~\cite{ST99}.
We checked this computation against our own top-level implementation of the binomial-lifting / subduction algorithm.

\medskip

For certification, we squared up $f$ with a random matrix, $g=Af$, and found $16$ complex approximate
solutions to $g$ using homotopy continuation.
Each solution was softly certified distinct via $\alpha$-theory.
Computing values $\delta (f,g,\cdot )$ as in Algorithm~\ref{alg:ind}, we softly certified
$12$ of these as nonsolutions to $f$, hence associating the four remaining solutions to
$f$. Observe that $d_L=12 \, \deg \Psi_L$ by Proposition~\ref{prop:kk}. Also, we have $d_L\le 16$ by B\'ezout's theorem.
This implies that $\deg \Psi_L=1$ and hence $d_L=12.$

\subsection{Example from Schubert calculus}
\label{subsec:schubertExample}

We describe a family of examples from Schubert calculus to which Algorithms~\ref{alg:ind},~\ref{alg:set},
and~\ref{alg:liaisonII} all apply.
For more on the Grassmannian and Schubert calculus, see~\cite{Fulton}.
Let $m\geq 2$ be an integer and set $n:=m{+}2$.
Consider the geometric problem of the 2-planes $H$ in $\CC^n$ that meet $m$ general codimension $3$ planes nontrivially.
The number of such 2-planes is the Kostka number $K_{m^2,2^m}$, the first few values of which are shown below.
\[
  \begin{tabular}{|l||c|c|c|c|c|c|c|c|c|c|c|c|c|c|}\hline
    $m$& 1 &2 & 3 & 4 & 5 & 6 & 7 & 8 & 9 & 10 & 11 & 12 & 13 & 14 \\\hline\hline
    $K_{m^2,2^m}$&0&1 & 1 & 3 & 6 & 15 & 36 & 91 & 232 & 603 & 1585 & 4213 & 11298 & 30537 \\\hline
  \end{tabular}
\]
This may be computed recursively.
Let $\kappa_{m,i}$ be the coefficient of the Schur function $S_{(m+i,m-i)}$ in the product $(S_{(2,0)})^m$.
Then $K_{m^2,2^m}=\kappa_{m,0}$.
For the recursion, set $\kappa_{1,1}:=1$ and $\kappa_{1,0}=\kappa_{m,j}:=0$, when $j>m$.
Then, for $m>1$, we set $\kappa_{m,0}:=\kappa_{m-1,1}$ and for $j>0$,
$\kappa_{m,j}:=\kappa_{m-1,j-1}+\kappa_{m-1,j}+\kappa_{m-1,j+1}$.

We express this geometric probem in local coordinates.
Write \defcolor{$I_2$} for the $2\times 2$ identity matrix and let \defcolor{$Z$} be a $2\times m$ matrix of
indeterminates, and set $\defcolor{H}:=(Z|I_2)^\top$, which has $n$ rows and $2$ columns.
For any choice of $Z\in\mbox{Mat}_{2\times m}(\CC)$, the column span of $H$, also written $H$, is a 2-plane in
$\CC^n=\CC^m\oplus\CC^2$ that does not meet the coordinate plane $\CC^m\oplus\{0\}$, and $\mbox{Mat}_{2\times m}(\CC)$
parametrizes the set of such $2$-planes.
For $k=1,\dotsc,m$, let \defcolor{$K_k$} be a general $n\times(m{-}1)$-matrix whose column span (also written $K_k$) is a
general $(m{-}1)$-plane.
Then $\dim H\cap K_k\geq 1$ if and only if the matrix $(H|K_k)$ has rank at most $m$.
This condition is given by the $n$ maximal minors $f_{k,1},\dotsc,f_{k,n}$ of  $(H|K_k)$, each of which is the determinant
of the square $(n{-}1)\times(n{-}1)$-matrix obtained by deleting a row from  $(H|K_k)$.
This gives a system $\defcolor{f}=(f_{k,j}\mid k=1,\dotsc,m\mbox{ and }j=1,\dotsc,n)$ of $mn$ quadratic equations in $2m$
variables which define the solutions to our geometric problem. 

Any polynomial $g$ that is a linear combination of the $f_{k,j}$ has the form $g=\det(H|K_k|\ell)$, where the entries of
$\ell$ are the coefficients of $(-1)^j f_{k,j}$ in that linear combination.
This justifies the following scheme to obtain a square subsystem of $f$.
For each $k=1,\dotsc,m$ and $i=1,2$, let $L_{k,i}\supset K_k$ be an $m$-plane that is general given that it contains $K_k$.
We obtain the matrix of $L_{k,i}$ by appending a general column vector to the matrix of $K_k$.
Let $g_{k,i}$ be the determinant of the matrix $(H|L_{k,i})$---this vanishes when $\dim H\cap L_{k,i}\geq 1$.
We claim that the susbsystem $g=(g_{1,1},g_{1,2},\dotsc,g_{m,1},g_{m,2})$  of $f$ is square.

For this, let us investigate the corresponding geometric loci in the Grassmannian $G(2,n)$.
Write $\Omega_{\sT}K_k$ for the set of all 2-planes which meet $K_k$ nontrivially, and $\Omega_{\sI}L_{k,i}$ for those
that meet $L_{k,i}$ nontrivially.
Let $\Lambda_k$ be the hyperplane containing both $L_{k,1}$ and $L_{k,2}$, and let $\Omega_{\sII}\Lambda_k$ be the set of
all 2-planes that are contained in $\Lambda_k$.
Since $L_{k,1}\cap L_{k,2}=K_k$ and $L_{k,1}+ L_{k,2}=\Lambda_k$ it was shown in~\cite{Sot97} that
 \begin{equation}\label{Eq:Schubert_CI}
  \Omega_{\sI}L_{k,1} \bigcap\Omega_{\sI}L_{k,2}\ =\ \Omega_{\sT}K_k \cup \Omega_{\sII}\Lambda_k
 \end{equation}
is a (generically) transverse intersection.

It is natural to analyze this geometric problem in the context of liaison theory discussed in Section~\ref{sec:liaison}:
specifically, we can use Algorithm~\ref{alg:liaisonII}. On the other hand, the algorithms from section~\ref{sec:nonsol}
work just as well. For each approach, we explain the details needed in order to certify. We note that the main bottleneck,
solving $g,$ is well within the capabilities of modern homotopy continuation software, say, for $m$ in the single
digits~\cite{LDSVV}.\medskip

\noindent{\bf Algorithm~\ref{alg:liaisonII}.}
In the local coordinates $H=(Z|I_2)^\top$, we have that $\Omega_{\sI}L_{k,i}=\variety(g_{k,i})$, so
that~\eqref{Eq:Schubert_CI} is a complete intersection 
and $\Omega_{\sT}K_k=\variety(f_{k,1},\dotsc,f_{k,n})$  is in liaison with $\Omega_{\sII}\Lambda_k$, which we show is a
complete intersection.
Let \defcolor{$\lambda_k$} be the linear form (a row vector) whose kernel is $\Lambda_k$.
Then $H\in\Omega_{\sII}\Lambda_k$ if and only if $H\subset\Lambda_k$, so that
$\lambda_k H=\left(\begin{smallmatrix}0\\0\end{smallmatrix}\right)$.
If $h_{k,1}$ and $h_{k,2}$ are the two rows of $\lambda_k H$, then 
$\Omega_{\sII}\Lambda_k=\variety(h_{k,1},h_{k,2})$, showing that it is a complete intersection.

Our geometric problem of the 2-planes $H$ that meet each of $K_1,\dotsc,K_m$ is equivalent to the intersection
\[
   \Omega_{\sT}K_1 \;\bigcap\;
   \Omega_{\sT}K_2 \;\bigcap\;\dotsb \;\bigcap\;
   \Omega_{\sT}K_m \;,
\]
which is a square system of varieties~\eqref{Eq:intersection}.
As each is in liaison with a complete intersection, Algorithm~\ref{alg:liaisonII} applies and may
be used to certify the solutions to our geometric problem.
Its input is the set $\variety(g)$, which consists of the points in the intersection
 \begin{equation}\label{Eq:GeneralEnough}
   \Omega_{\sI}L_{1,1} \bigcap\Omega_{\sI}L_{1,2}
   \;\bigcap\; \dotsb\;\bigcap\;\Omega_{\sI}L_{k,i}\;\bigcap\; \dotsb\;\bigcap\;
   \Omega_{\sI}L_{m,1} \bigcap\Omega_{\sI}L_{m,2}\,.
 \end{equation}
While each pair $\Omega_{\sI}L_{k,1} \bigcap\Omega_{\sI}L_{k,2}$ is not in general position, this intersection is
generically transverse, and the different pairs are in general position, so the intersection~\eqref{Eq:GeneralEnough}
is transverse.
Consequently, the number of points in the intersection~\eqref{Eq:GeneralEnough} is the expected number, which is the Catalan number $C_{m}:=\frac{1}{m+1} \binom{2m}{m}$.

\noindent{\bf Algorithm~\ref{alg:ind}.}
For this algorithm, the number $d$ of excess solutions is $\frac{1}{m+1} \binom{2 m}{m}-K_{m^2,2^m}$.
It starts with the set $S=\variety(g)$ of $\frac{1}{m+1}\binom{2m}{m}$ points in the intersection~\eqref{Eq:GeneralEnough}.  
We run Algorithm~\ref{alg:ind}, and if it finds that $\#R=d$, so that we have rejected all nonsolutions, then those that
remain are certified solutions to our geometric problem $\variety(f)$.
Otherwise, we may refine the approximate solutions $\hat{\zeta}$ in $S$ so that the Newton steps $\beta(g,\hat{\zeta})$
become small enough to reject $d$ nonsolutions.

This algorithm is particularly easy in this case as the Taylor residual~\eqref{eq:taylor_bound} of a linear function $\phi$
is $|\phi(\hat{\zeta})|-\|\phi'\|\delta$, where the derivative $\phi'$ of $\phi$ is a vector.
\medskip

\noindent{\bf Algorithm~\ref{alg:set}.}
Here, we simply observe that $d=\frac{1}{m+1}\binom{2m}{m}$ is the number of solutions to the square system $g$ and $e=K_{m^2,2^m}$ is the number of
solutions to $f$. These data together with a full set of approximate solutions to $g,$ are all that is needed for certification.

\subsection{Essential matrix estimation}
\label{subsec:vision}


A fundamental object of study in geometric computer vision is the essential variety
\begin{equation}
  \label{eq:defVess}
  V_{\ess} := \{ E \in \PP (\CC^{3\times 3}) \mid E E^\top E - \tfrac{1}{2} \tr (E E^\top ) E = 0, \det E = 0\}.
\end{equation}
This is an irreducible variety of dimension $5$ and degree $10.$ Elements of $V_{\ess}$ are called
\demph{essential matrices}.
The ten polynomials defining $V_{\ess}$ are known as the \demph{Demazure cubics}~\cite{Dem88}.
They minimally generate the homogeneous ideal of $V_{\ess}$. It is possible to recover an essential matrix given five generic point \demph{correspondence constraints} of the form
\begin{equation}
    \label{eq:ptptE}
    y_i^\top E x_i = 0 \text{ for } i=1,\ldots , 5,
  \end{equation}
where $x_1,\ldots , x_5, y_1, \ldots , y_5 \in \PP (\CC^3).$ Although the overdetermined family given by equations~\eqref{eq:defVess} and~\eqref{eq:ptptE} is fairly simple, it is notable for its apperance in applications. State of the art algorithms for solving these equations run on the order of microseconds~\cite{Nis04} and have been successfully employed in large-scale 3D reconstruction pipelines~\cite{SSS08}. This motivates the problem of developing certification techniques for this problem with comparable efficiency. For concreteness, we consider an instance of this problem in which the data are given by

\begin{center}
\begin{tabular}{ccccc}
  $x_1 = \begin{pmatrix}
       {0}\\
       {0}\\
       1\end{pmatrix},\,
       $
       &
       $
       x_2 = \begin{pmatrix}
       {0}\\
       1\\
       1\end{pmatrix},\,
       $
       &
       $
       x_3 = \begin{pmatrix}
       {.750733}\\
       {.393279}\\
       1\end{pmatrix},\,
       $
       &
       $
       x_4 = \begin{pmatrix}
       {.383872}\\
       {.210436}\\
       1\end{pmatrix},\,
       $
       &
       $
       x_5 = \begin{pmatrix}
       {.970556}\\
       {.699694}\\
       1\end{pmatrix},$\\[1.85em]
$y_1 = \begin{pmatrix}
       {0}\\
       {0}\\
       1\end{pmatrix},\,
              $
       &
       $
       y_2 = \begin{pmatrix}
       {0}\\
       1\\
       1\end{pmatrix},\,
              $
       &
       $
       y_3 = \begin{pmatrix}
       {.355041}\\
       {.153766}\\
       1\end{pmatrix},\,
              $
       &
       $
       y_4 = \begin{pmatrix}
       {.090869}\\
       {.143374}\\
       1\end{pmatrix},\,
              $
       &
       $
       y_5 = \begin{pmatrix}
       {.003463}\\
       {.17189}\\
       1\end{pmatrix}.$
\end{tabular}
\end{center}

Truncating to $6$ decimal places, we may regard each of the $x_i$ and $y_i$ as rational vectors and seek a certificate. A candidate approximate solution is given to 6 places by

$$\hat{E}=
\begin{pmatrix}
      1&{-{2.36148}}&{-{.017451}}\\
      {2.52018}&{.979523}&{-{.066457}}\\
      {.117939}&{-{.913067}}&{-{10^{-6}}}\end{pmatrix}.
$$

To certify $\hat{E},$ we consider the square system $g$ given by
 equations~\eqref{eq:ptptE}, the chart $e_{1,1}=1,$ and the first three Demazure cubics: namely 
  \begin{align}
    \label{eq:dem3}
    ({e}_{2,1}^{2}+{e}_{2,2}^{2}+{e}_{2,3}^{2}+ {e}_{3,1}^{2}+{e}_{3,2}^{2}+{e}_{3,3}^{2}) e_{1,i} &+ ({e}_{1,1}{e}_{2,1}+{e}_{1,2}{e}_{2,2}+{e}_{1,3}{e}_{2,3}) e_{2,i} \\
    &+ ({e}_{1,1}{e}_{3,1}+{e}_{1,2}{e}_{3,2}+{e}_{1,3}{e}_{3,3}) e_{3,i}\nonumber 
  \end{align}
The number of solutions to $g$ is bounded \emph{a priori} by $27,$ and we can easily certify that this bound is attained. It follows that we may apply the methods described in Section~\ref{sec:nonsol}. However, there is an even simpler procedure based on the exclusion criteria of Proposition~\ref{prop:reject} and~\ref{cor:refine_and_reject}, as well as the following result obtained by symbolic computation.
  \begin{prop}
    \label{prop:VsqDecomp}
    Let $V_{sq}$ be the subvariety of $\PP (\CC^{3\times 3})$ defined by equations~\eqref{eq:ptptE} and~\eqref{eq:dem3}. Consider the polynomials 
    \begin{itemize}
    \item[] $f_1 := e_{1,1}$
    \item[] $f_2 := {e}_{1,1}{e}_{3,1}+{e}_{1,2}{e}_{3,2}+{e}_{1,3}{e}_{3,3}$
    \item[] $f_3 := {e}_{1,1}^{2}+{e}_{1,2}^{2}+{e}_{1,3}^{2}$
    \end{itemize}
    and define projective varieties as the Zariski closures of the indicated quasiprojective varieties,
    \begin{itemize}
    \item[] $V_1 := \overline{V_{sq} \setminus \variety (f_1)}$
    \item[] $V_2 :=  \overline{V_1 \setminus \variety (f_2) }$
    \item[] $V_3 := \overline{V_2 \setminus \variety (f_3)}$
\end{itemize}
We have that $V_3 = V_{\ess}.$
\end{prop}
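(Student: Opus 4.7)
The plan is to verify $V_3 = V_{\ess}$ by a direct primary decomposition of the ideal $I(V_{sq})$ in a computer algebra system such as Macaulay2.

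First, I form the ideal $I(V_{sq}) \subset \CC[e_{1,1},\dotsc,e_{3,3}]$ generated by the five point-correspondence polynomials of \eqref{eq:ptptE} and the three Demazure cubics of \eqref{eq:dem3}, and compute a primary decomposition
\[
I(V_{sq}) \;=\; Q_{\ess} \cap Q_1 \cap \dotsb \cap Q_r,
\]
where $Q_{\ess}$ is the primary component cutting out (the relevant portion of) $V_{\ess}$ and each $Q_k$ is an ``extraneous'' primary component whose associated variety satisfies the three chosen Demazure cubics but not the remaining seven defining $V_{\ess}$.

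Second, for each extraneous $Q_k$ I verify the ideal-membership test $f_{i(k)} \in \sqrt{Q_k}$ for an appropriate index $i(k) \in \{1,2,3\}$, which gives the geometric containment $\variety(Q_k) \subset \variety(f_{i(k)})$. The indices $i(k)$ must be compatible with the order in which $f_1, f_2, f_3$ are applied, so that each $\variety(Q_k)$ is actually killed at its scheduled stage of the iterated pruning $V_{sq} \rightsquigarrow V_1 \rightsquigarrow V_2 \rightsquigarrow V_3$. Dually, one checks that $f_i \notin \sqrt{Q_{\ess}}$ for $i=1,2,3$, which is immediate because $V_{\ess}$ is irreducible of dimension $5$ and is not contained in any of the three hypersurfaces $\variety(f_i)$; hence the essential-matrix component survives every pruning step.

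Finally, interpreting each Zariski closure $\overline{V_i \setminus \variety(f_{i+1})}$ as the union of those components of $V_i$ not contained in $\variety(f_{i+1})$, three successive pruning steps eliminate all extraneous components, leaving $V_3$ equal to the $V_{\ess}$-component of $V_{sq}$ and establishing the claim. The main obstacle is the cost of the primary decomposition, but the favourable structure of \eqref{eq:dem3}---each cubic depending only on one particular column of $E$, and vanishing along an evident ``degeneration'' locus detected by one of $f_1, f_2, f_3$---suggests the form of the extraneous components in advance and should keep the computation tractable in Macaulay2.
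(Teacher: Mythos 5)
Your route is correct but genuinely different from the paper's. The paper's proof is purely algebraic and deliberately avoids primary decomposition: starting from $I_0 = \langle\text{\eqref{eq:ptptE}}, \text{\eqref{eq:dem3}}\rangle$, it forms the iterated ideal quotients $I_k = I_{k-1} : f_k$ for $k=1,2,3$ and then verifies $I_3$ equals the ideal of $V_{\ess}$ by comparing reduced Gr\"obner bases. The correctness of the proposition then falls out of the general identity $\variety(I : f) = \overline{\variety(I)\setminus\variety(f)}$, with no need to know \emph{which} components were removed or why. Your version instead computes a primary (or minimal) decomposition of $I_0$, tags each extraneous component $Q_k$ by an $i(k)$ with $f_{i(k)}\in\sqrt{Q_k}$, and confirms $f_i\notin\sqrt{Q_{\ess}}$. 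This is structurally more informative --- you actually see the extraneous loci --- but is also the computationally heavier route: primary decomposition is substantially more expensive than three colon operations plus a Gr\"obner basis comparison, and in practice for this example the latter is what keeps Macaulay2 fast.

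Two small points worth tightening. First, your worry about choosing indices $i(k)$ ``compatible with the order in which $f_1,f_2,f_3$ are applied'' is unnecessary: since taking $\overline{V\setminus\variety(f)}$ simply drops the irreducible components of $V$ contained in $\variety(f)$, the composite $V_3$ is exactly the union of components of $V_{sq}$ not contained in any of $\variety(f_1),\variety(f_2),\variety(f_3)$, independent of order. So it suffices to show every extraneous minimal prime contains some $f_i$ --- no scheduling is required. Second, be aware that your argument establishes only set-theoretic equality $V_3 = V_{\ess}$, whereas the paper's Gr\"obner basis check delivers ideal-theoretic equality; this is enough for the proposition as stated but is a real distinction if one later wants scheme-theoretic information.
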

  \begin{proof}
This follows by symbolically computing ideal quotients: letting $I_0$ by the ideal generated by~\eqref{eq:ptptE} and~\eqref{eq:dem3} and $I_k = I_{k-1}:f_k$ for $k=1,2,3,$ we may establish that $I_3$ equals the ideal defining $V_{\ess}$; for instance, by showing they have the same reduced Gr\"{o}bner basis for a given term order.  This is easily accomplished by Macaulay2.
\end{proof}

  For generic data $(x_i, y_i),$ we expect the exclusion criteria of Proposition~\ref{prop:reject} and~\ref{cor:refine_and_reject} for $f_1, f_2$ and $f_3$ to be satisfied for candidate solutions to the overdetermined problem: we may easily verify this in rational arithmetic for our given $\hat{E}$. Moreover, we estimate that $\alpha (g, \hat{E}) \approx .00059 ,$ thus giving that $\hat{E}$ is an approximate solution using the refinement operator $N_g.$

We remark that $\alpha $-theory does not furnish a certificate if we use the Newton fixed point system given by equation~\ref{Eq:ODNewtonCrit} in the introduction. For this system, whose defining equations are of higher degree, we estimate that $\alpha (g, \hat{E}) \approx 9.23 .$ We also found, using Gr\"{o}bner bases over a finite field, that the number of excess solutions to this fixed point system is $24,$ for a total of $34$ critical points overall for the least-squares Newton operator. By constrast, Proposition~\ref{prop:VsqDecomp} furnishes a certificate that requires no excess solutions whatsoever. Thus, even for this toy example, we see how the flexibility of square subsystems may enhance the prospects of obtaining rigorous mathematical proofs from the output of numerical computations. Although certification for overdetermined systems remains a challenge in general, similar techniques may be worth considering for problems of a larger scale.

\bibliographystyle{plain}
\bibliography{art}

\end{document}